\newtheorem{theorem}{\normalfont\scshape Theorem}
\newtheorem{proposition}[theorem]{\normalfont\scshape Proposition}
\title[A numerical method for a Klein-Gordon equation]{A numerical method for computing radially symmetric
        solutions of a dissipative nonlinear modified Klein-Gordon
        equation}
\author{J. E. Mac\'{\i}as-D\'{\i}az}
\address{Department of Physics, University of New Orleans, 
        New Orleans, LA 70148}
\email{jemacias@uno.edu}
\author{A. Puri}
\address{Department of Physics, University of New Orleans, 
        New Orleans, LA 70148}
\email{apuri@uno.edu}
\subjclass[2000]{Primary 45.10.-b, 05.45.-a; Secondary 02.30.Hq, 05.45.-a}
\keywords{Finite-difference scheme; Klein-Gordon equation}
\date{\today}
\begin{document}

\begin{abstract}
In this paper we develop a finite-difference scheme to approximate
radially symmetric solutions of the initial-value problem with
smooth initial conditions
\begin{equation}
\begin{array}{c}
    \displaystyle{\frac {\partial ^2 w} {\partial t ^2} - \nabla ^2 w
    - \beta \frac {\partial} {\partial t} \left( \nabla ^2 w \right)
    + \gamma \frac {\partial w} {\partial t} + m ^2 w + G ^\prime (w)
    = 0} \\
    \begin{array}{rl}
        \begin{array}{l}
            {\rm subject\ to:} \qquad \\ \\ \\
        \end{array}
        \left\{
        \begin{array}{ll}
            w (\bar{x} , 0) = \phi (\bar{x}), & \bar{x} \in D \\
            \displaystyle {\frac {\partial w} {\partial t} (\bar{x} ,
            0)} = \psi (\bar{x}), & \bar {x} \in D
        \end{array}\right.
    \end{array} \\ \\ \\
\end{array}
\label{paperproblem}
\end{equation}
in an open sphere $D$ around the origin, where the internal and
external damping coefficients---$\beta$ and $\gamma$,
respectively---are constant, and the nonlinear term has the form
$G ^\prime (w) = w ^p$, with $p > 1$ an odd number. The functions 
$\phi$ and $\psi$ are radially symmetric in $D$, and $\phi$, $\psi$,
$r \phi$ and $r \psi$ are assumed to be small at infinity. We 
prove that our scheme is consistent order $\mathcal {O} ( \Delta t ^2 ) +
\mathcal {O} ( \Delta r ^2 )$ for $G ^\prime$ identically equal to
zero, and provide a necessary condition for it to be stable order
$n$. Part of our study will be devoted to compare the physical
effects of $\beta$ and $\gamma$.
\end{abstract}

\maketitle

\section{Introduction}

Klein-Gordon-like equations appear in several branches of modern
physics. A modified sine-Gordon equation appears for instance in
the study of long Josephson junctions between superconductors when
dissipative effects are taken into account \cite{Solitons}. A
similar partial differential equation with different nonlinear
term appears in the study of fluxons in Josephson tramsmission
lines \cite{Lomdahl}. A modified Klein-Gordon equation appears in
the statistical mechanics of nonlinear coherent structures such as
solitary waves in the form of a Langevin equation (see \cite{Makhankov} 
pp. 298--309); here no internal damping coefficient appears,
though. Finally, our differential equation describes the motion of
a damped string in a non-Hookean medium.

\smallskip
The classical $(1 + 1)$-dimensional linear Klein-Gordon equation
has an exact soliton-like solution in the form of a traveling wave
\cite{RussianBook}. Some results concerning the analytic behavior
of solutions of nonlinear Klein-Gordon equations have been
established \cite{Glassey,Jorgens,Barone}; however, no exact
method of solution is known for arbitrary initial-value problems
involving this equation. From that point of view it is important
to investigate numerical techniques to describe the evolution of
radially symmetric solutions of (\ref{paperproblem}).

\smallskip
It is worth mentioning that some numerical research has been done
in this direction. Strauss and V\'{a}z\-quez \cite{StraussVazquez}
developed a finite-difference scheme to approximate radially
symmetric solutions of the nonlinear Klein-Gordon equation for the
same nonlinear term we study in this paper; one of the most
important features of their numerical method was that the discrete
energy associated with the differential equation is conserved. The
numerical study of the sine-Gordon model that describes the
Josephson tunnel junctions has been undertaken by Lomdahl {\it et
al.} \cite{Lomdahl}. Numerical simulations have also been
performed to solve the $(1 + 1)$-dimensional Langevin equation
\cite{AlexHabib}.

\smallskip
In this paper we extend Strauss and V\'{a}zquez's technique to
include the effects of both internal and external damping, and validate
our results against those in \cite{StraussVazquez}. Section
\ref{sec2} is devoted to setting up the finite-difference scheme;
the energy analysis of our problem is also carried out. Numerical
results are presented in Section \ref{sec3}, followed by a brief
discussion.

\section{Analysis}
\label{sec2}

\subsection*{Analytical results}

The following is the major theoretic result we will use in our
investigation. Here $M (t)$ represents the amplitude of a solution
of (\ref{paperproblem}) at time $t$, that is
\begin{equation}
M (t) = \max _{x} | w (x , t) | . \nonumber
\end{equation}

\begin{theorem}
Let $\beta$ and $\gamma$ be both equal to zero, and let $G ^\prime
(w) = | w | ^{p - 1} w$. Suppose that $\phi$ and $\psi$ are smooth
and small at infinity. Then
\begin{enumerate}
\setlength{\itemsep}{0pt}
\item[{\rm (1)}] If $p < 5$, a unique smooth solution of {\rm
(\ref{paperproblem})} exists with amplitude bounded at all time
{\rm \cite{Jorgens}}.

\item[{\rm (2)}] If $p \geq 5$, a weak solution exists for all
time {\rm \cite{Segal}}.

\item[{\rm (3)}] For $p > 8/3$ and for solutions of bounded
amplitude, there is a scattering theory; in particular, they decay
uniformly as fast as $M (t) \leq c (1 + |t|) ^{ - 3 / 2}$ {\rm
\cite{Morawetz}}. \qedhere \qed
\end{enumerate}
\end{theorem}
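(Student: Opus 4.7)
The statement collects three classical results, each with an attached citation, so the natural plan is to recover each claim by appealing to the cited work rather than re-deriving anything from scratch. My proposal is to organize the argument by the three cases $(p<5)$, $(p\geq 5)$, and $(p>8/3)$, using the conserved energy
\begin{equation*}
E(t)=\tfrac{1}{2}\!\int\!\left(|w_t|^2+|\nabla w|^2+m^2 w^2\right)dx+\tfrac{1}{p+1}\!\int|w|^{p+1}dx
\end{equation*}
as the common thread, since setting $\beta=\gamma=0$ and multiplying the PDE by $w_t$ immediately gives $\tfrac{d}{dt}E(t)=0$ and hence an a priori bound on $\|w(\cdot,t)\|_{H^1}$.

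For part (1), I would follow J\"orgens: combine the $H^1$ bound from energy conservation with a local well-posedness statement obtained by contraction mapping in $C([0,T];H^1)\cap C^1([0,T];L^2)$. The subcritical hypothesis $p<5$ enters through the Sobolev embedding $H^1(\mathbb{R}^3)\hookrightarrow L^{p+1}(\mathbb{R}^3)$, which controls the nonlinear term uniformly in terms of the energy; this lets one extend the local solution globally in time and recover the pointwise bound on $M(t)$ via smoothness and the radial symmetry, which reduces the problem to a one-dimensional equation for $rw$.

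For part (2), I would invoke Segal's construction, which replaces pointwise control of $|w|^{p-1}w$ with a compactness argument: take a Galerkin sequence $\{w_n\}$ in a basis of $H^1$, use the uniform energy bound to extract a weak-$\ast$ limit, and verify that the limit satisfies the equation in the distributional sense. The only delicate point is passing to the limit in the nonlinear term, which is handled by Rellich--Kondrachov on bounded space-time regions.

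For part (3), I would appeal to the Morawetz identity, obtained by multiplying the equation by $(t\,w_t+r\,w_r+w)$ (the conformal multiplier) and integrating by parts over a backward light cone. This produces a weighted space-time estimate of the form $\int\!\int|w|^{p+1}/r\,dx\,dt<\infty$, whose finiteness, together with the energy bound and the condition $p>8/3$, yields the pointwise decay $M(t)\leq c(1+|t|)^{-3/2}$ by an integration-along-characteristics argument. I expect this third part to be the genuinely hard step; the subcritical and weak-solution pieces follow standard templates, whereas the sharp decay rate rests on the delicate conformal identity and the restriction $p>8/3$, which is exactly where the Morawetz estimate stops producing a closable bootstrap. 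Since the theorem is presented as a summary of the cited references, the "proof" I would actually write is essentially a pointer to \cite{Jorgens,Segal,Morawetz} with the above sketch indicating the mechanism in each case.
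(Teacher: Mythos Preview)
Your reading is correct: the paper does not prove this theorem at all. It is stated purely as a summary of known results, with each of the three items carrying its own citation (J\"orgens, Segal, Morawetz--Strauss) and the statement closed immediately by a \texttt{\textbackslash qed} with no intervening argument. So the paper's ``proof'' is literally just the three references.

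Your proposal therefore goes well beyond the paper's own treatment: you outline the energy-conservation backbone, the contraction/continuation mechanism for the subcritical case, the Galerkin/compactness route for weak solutions, and the conformal-multiplier Morawetz identity for the decay estimate. That is a reasonable roadmap through the cited literature, but none of it appears in the paper, and for the purposes of matching the paper you could have stopped at your final sentence: the theorem is presented as a quotation of \cite{Jorgens}, \cite{Segal}, and \cite{Morawetz}, with no independent argument offered.
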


\subsection*{Finite-difference scheme}

Throughout this section we will assume that the functions $\phi (
\bar {x} )$ and $\psi ( \bar {x} )$ are smooth, of compact
support, radially symmetric in the open sphere $D$ with center
in the origin and radius $L$, and that $\phi$, $\psi$,  
$r \phi$ and $r \psi$ are small at infinity in $D$. Moreover, we 
will suppose that $w (\bar {x} , t)$ is a radially symmetric solution of
(\ref{paperproblem}).

\smallskip
Let $r = || \bar {x} ||$ be the Euclidean norm of $\bar {x}$ and let 
$G ^\prime (w) = w ^p$, for $p>1$ an odd number.
Setting $v (r , t) = r w (r , t)$ for every $0 < r < L$ and $t \in
\mathbb {R}$, it is evident that $v$ must satisfy the mixed-value
problem
\begin{equation}
\begin{array}{c}
    \displaystyle {\frac {\partial ^2 v} {\partial t ^2} - \frac
    {\partial ^2 v} {\partial r ^2} + \gamma \frac {\partial v}
    {\partial t} - \beta \frac {\partial ^3 v} {\partial t \
    \partial r ^2}+ m ^2 v + r G ^\prime ( v / r ) = 0} \\
    \begin{array}{rl}
        \begin{array}{l}
            {\rm subject\ to:} \qquad \\ \\ \\
        \end{array}
        \left\{
        \begin{array}{ll}
            v (r , 0) = r \phi (r), & 0 < r < L \\
            \displaystyle {\frac {\partial v} {\partial t} (r ,
            0)} = r \psi (r), & 0 < r < L \\
            v (0 , t) = 0, & t \geq 0.
        \end{array}\right.
    \end{array}
\end{array}
\label{KGur4-4}
\end{equation}

Proceeding now to discretize our problem, 
let $a < L$ be a positive number with the property that $\phi$ and
$\psi$ vanish outside of the sphere with center in the origin and
radius $a - \epsilon$, for some $\epsilon > 0$. Let $0 = r_0 < r
_1 < \dots < r _M = a$ and $0 = t _0 < t _1 < \dots < t _N = T$ be
partitions of $[0 , a]$ and $[0 , T]$, respectively, into $M$ and
$N$ subintervals of lengths $\Delta r = a / M$ and $\Delta t = T /
N$, respectively. Denote the approximate value of $v (r _j , t
_n)$ by $v _j ^n$. The finite-difference scheme associated with
(\ref{KGur4-4}) is
\begin{eqnarray}
\frac {v _j ^{n + 1} - 2 v _j ^n + v _j ^{n - 1}} {(\Delta t) ^2}
- \frac {v _{j + 1} ^n - 2 v _j ^n + v _{j - 1} ^n} {(\Delta r)
^2} + \gamma \frac {v _j ^{n + 1} - v _j ^{n - 1}} {2 \Delta t} \
- \qquad & & \nonumber \\
\beta \frac {\left( v _{j + 1} ^{n + 1} - 2 v _j ^{n + 1} + v _{j
- 1} ^{n + 1}\right) - \left( v _{j + 1} ^{n - 1} - 2 v _j ^{n -
1} + v _{j - 1} ^{n - 1} \right)} {2 \Delta t \left( \Delta r
\right) ^2} \ + \qquad & & \label{EasyScheme1} \\
\frac {m ^2} {2} \left[ v _j ^{n + 1} + v _j ^{n - 1} \right] +
\frac {1} {(j \Delta r) ^{p - 1}} \frac {G ( v _ j ^{n + 1}) - G (
v _j ^{n - 1})} {v _j ^{n + 1} - v _j ^{n - 1}} & = & 0, \nonumber
\end{eqnarray}
where $G (v) =  v  ^{p + 1} / (p + 1)$.

\smallskip
Computationally, our method requires an application of Newton's
method for systems of nonlinear equations along with Crout's
reduction technique for tridiogonal linear systems.

\smallskip
An interesting property of this finite-difference scheme is that,
for initial approximations $\{ w _j ^0 \}$ and $\{ w _j ^1 \}$ with
zero centered-difference first spatial derivatives at the origin, the 
successive approximations provided by the method have likewise 
centered-difference first spatial derivative equal to zero at the origin. 
This claim follows by induction using the facts that
\begin{equation*}
v _1 ^k + v _{- 1} ^k + \beta \frac {(v _1 ^{k + 1} + v _{- 1} ^{k + 1})
- ( v _1 ^{k - 1} + v _{- 1} ^{k - 1} )} {2 \Delta t} = 0,
\end{equation*}
that $w _1 ^n = w _{- 1} ^n$ if{}f $v _1 ^n + v _{- 1} ^n = 0$, 
and the substitution $v _j ^n = j w _j ^n \Delta r$. The induction 
hypothesis implies that $v _1 ^{n} + v _{- 1} ^{n} = 
0$ for every $n \leq k$, whence the claim follows.

\smallskip
Our last statement implies that for a smooth initial profile at the 
origin, the subsequent approximations yielded by our method will be
likewise smooth. As a test case, it is worthwhile mentioning that we have successfully
obtained numerical results to verify this 
claim using a Gaussian initial profile centered at the origin.

\subsection*{Stability analysis}

It is clear that (\ref{EasyScheme1}) is consistent order $\mathcal
{O} (\Delta t ^2) + \mathcal {O} (\Delta r ^2)$ with
(\ref{KGur4-4}) whenever $G ^\prime$ is identically equal to zero.
Moreover, in order for the finite-difference scheme to be stable
order $n$ it is necessary that
\begin{eqnarray}
\left( \frac {\Delta t} {\Delta r} \right) ^2 & < & 1 + \gamma
\frac {\Delta t} {4} + \beta \frac {\Delta t} { \left( \Delta r
\right) ^2 } + m ^2 \frac {(\Delta t) ^2} {4}. \nonumber
\end{eqnarray}

To verify this claim, notice first that (\ref{EasyScheme1}) can be
rewritten as
\begin{equation}
\begin{array}{rcl}
\displaystyle {\frac {v _j ^{n + 1} - 2 v _j ^n + v _j ^{n - 1}}
{(\Delta t) ^2} - \frac {\delta _0 ^2 v _j ^n} {(\Delta r) ^2} +
\gamma \frac {v _j ^{n + 1} - v _j ^{n - 1}} {2 \Delta t}} \ -
\qquad & & \\
\displaystyle {\beta \frac {\delta _0 ^2 v _j ^{n + 1} - \delta _0 ^2 v
_j ^{n - 1}} {2 \Delta t \left( \Delta r \right) ^2 } + \frac {m
^2} {2} \left[ v _j ^{n + 1} + v _j ^{n - 1} \right]} & = & 0.
\end{array} \nonumber
\end{equation}

Define $R = \Delta t / \Delta r$. Let $V _{1j} ^{n + 1} = v _j ^{n
+ 1}$ and $V _{2j} ^{n + 1} = v _j ^n$ for each $j = 0 , 1 , \dots
, M$ and $n = 0, 1, \dots, N - 1$. For every $j = 0, 1, \dots , M$
and $n = 1, 2, \dots , N$ let $\bar {V} _j ^n$ be the column
vector whose components are $V _{1j} ^n$ and $V _{2j} ^n$. Our
problem can be written then in matricial form as
\begin{eqnarray}
\left( \begin{array}{cc} %
k & 0 \\
0 & 1 \end{array} \right)
\bar {V} _j ^{n + 1} & = & \left( %
\begin{array}{cc}
2 + R ^2 \delta _0 ^2 & - h \\
1 & 0
\end{array}
\right) \bar {V} _j ^n, \nonumber
\end{eqnarray}
where
\begin{eqnarray}
k & = & 1 + \gamma \frac {\Delta t} {2} - \frac {\beta \Delta t
\delta _0 ^2} {2 \left( \Delta r \right) ^2} + m ^2 \frac {( \Delta
t) ^2} {2} \qquad \qquad {\rm and} \nonumber \\
h & = & 1 - \gamma \frac {\Delta t} {2} + \frac {\beta \Delta t
\delta _0 ^2} {2 \left( \Delta r \right) ^2} + m ^2 \frac {( \Delta
t) ^2} {2}. \nonumber
\end{eqnarray}

Denoting the Fourier transform of each $\bar {V} _j ^{n}$ by $\hat
V _j ^n$, we obtain that
\begin{eqnarray}
\hat {V} _j ^{n + 1} & = & \left( %
\begin{array}{cc}
\frac {2} {\hat {k} ( \xi )} \left(1 - 2 R ^2 \sin ^2 \frac {\xi}
{2} \right) & - \frac {\hat {h} ( \xi )} {\hat {k} ( \xi )} \\
1 & 0
\end{array}
\right) \hat {V} _j ^n, \nonumber
\end{eqnarray}
where
\begin{eqnarray}
\hat {k} ( \xi ) & = & 1 + \gamma \frac {\Delta t} {2} + 2 \frac
{\beta \Delta t} {\left( \Delta r \right) ^2} \sin ^2 \frac { \xi
} {2} + m ^2 \frac {( \Delta
t) ^2} {2} \qquad \qquad {\rm and} \nonumber \\
\hat {h} ( \xi ) & = & 1 - \gamma \frac {\Delta t} {2} - 2 \frac
{\beta \Delta t} {\left( \Delta r \right) ^2} \sin ^2 \frac { \xi
} {2} + m ^2 \frac {( \Delta t) ^2} {2}. \nonumber
\end{eqnarray}

The matrix $A ( \xi )$ multiplying $\hat {V} _j ^n$ in the last
vector equation is the amplification matrix of the problem, which
has eigenvalues given by
\begin{eqnarray}
\lambda _\pm & = & \frac {1 - 2 R ^2 \sin ^2 \frac {\xi} {2} \pm
\sqrt{ \left( 1 - 2 R ^2 \sin ^2 \frac {\xi} {2} \right) - \hat
{h} ( \xi ) \hat {k} ( \xi ) }} {\hat {k} ( \xi )}. \nonumber
\end{eqnarray}
In particular, for $\xi = \pi$ the eigenvalues of $A$ are
\begin{eqnarray}
\lambda _\pm & = & \frac {1 - 2 R ^2 \pm \sqrt{ (1 - 2 R ^2) ^2 -
\hat {h} (\pi) \hat {k} (\pi)}} {\hat {k} (\pi)}. \nonumber
\end{eqnarray}

Suppose for a moment that $1 - 2 R ^2 < - \hat {k} (\pi)$. If the
radical in the expression for the eigenvalues of $A (\pi)$ is a
pure real number then $| \lambda _- | > 1$. So for every $n \in
\mathbb {N}$, $|| A ^n || \geq | \lambda _- | ^n$ grows faster
than $K _1 + n K _2$ for any constants $K _1$ and $K _2$. A
similar situation happens when the radical is a pure imaginary
number, except that in this case $| \cdot |$ represents the usual
Euclidean norm in the field of complex numbers.

\smallskip
Summarizing, if $1 - 2 R ^2 < - \hat {k} (\pi)$ then scheme
(\ref{EasyScheme1}) is unstable. Therefore in order for
our numeric method to be stable order $n$ it is necessary that $1
- 2 R ^2 > - \hat {k} (\pi)$, which is what we needed to
establish.

\subsection*{Energy analysis}

\begin{figure}[tcb]
\centerline{
\begin{tabular}{cc}
\scriptsize{$t = 0$} &
\scriptsize{$t = 0.04$} \\
\includegraphics[width=0.45\textwidth]{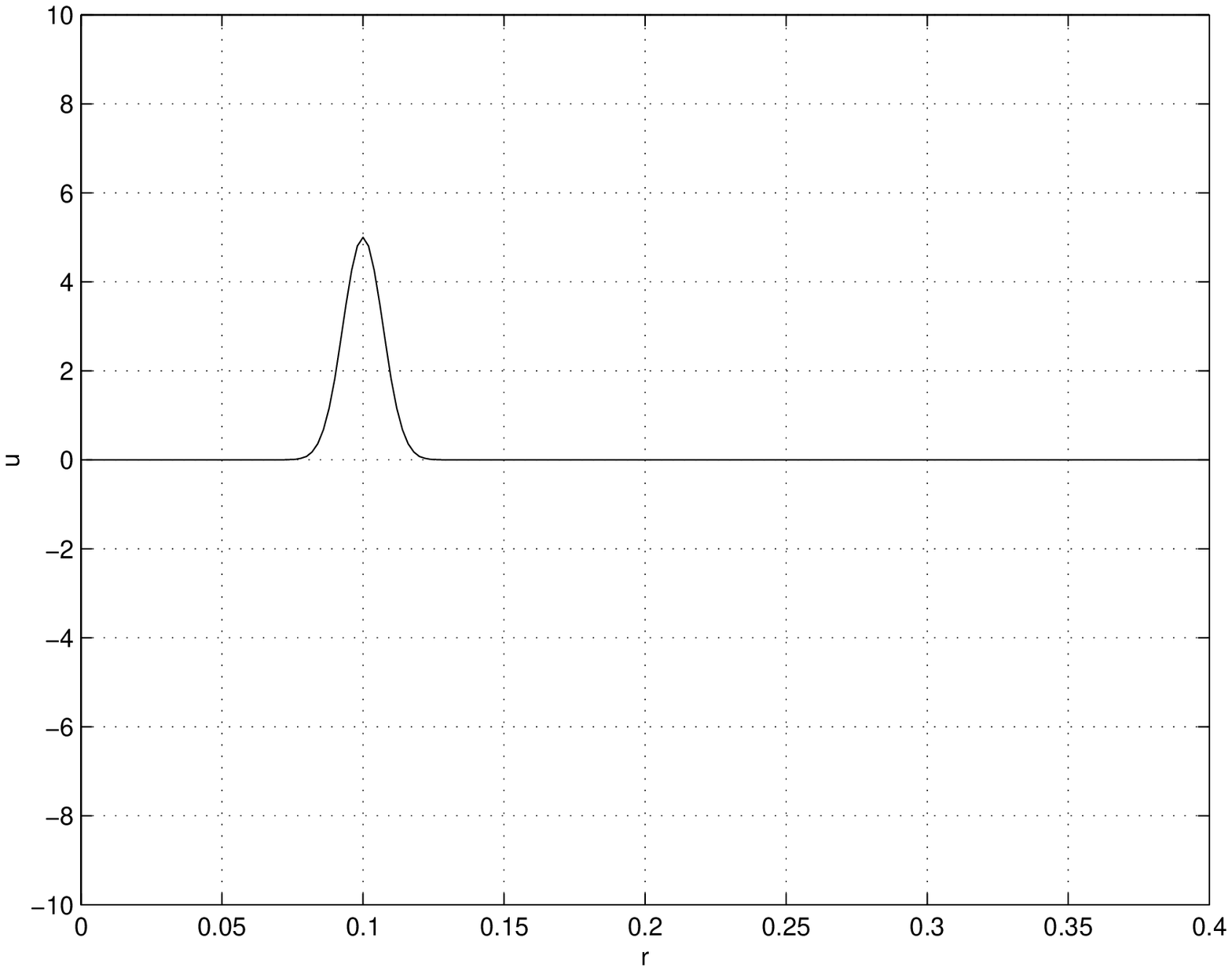} &
\includegraphics[width=0.45\textwidth]{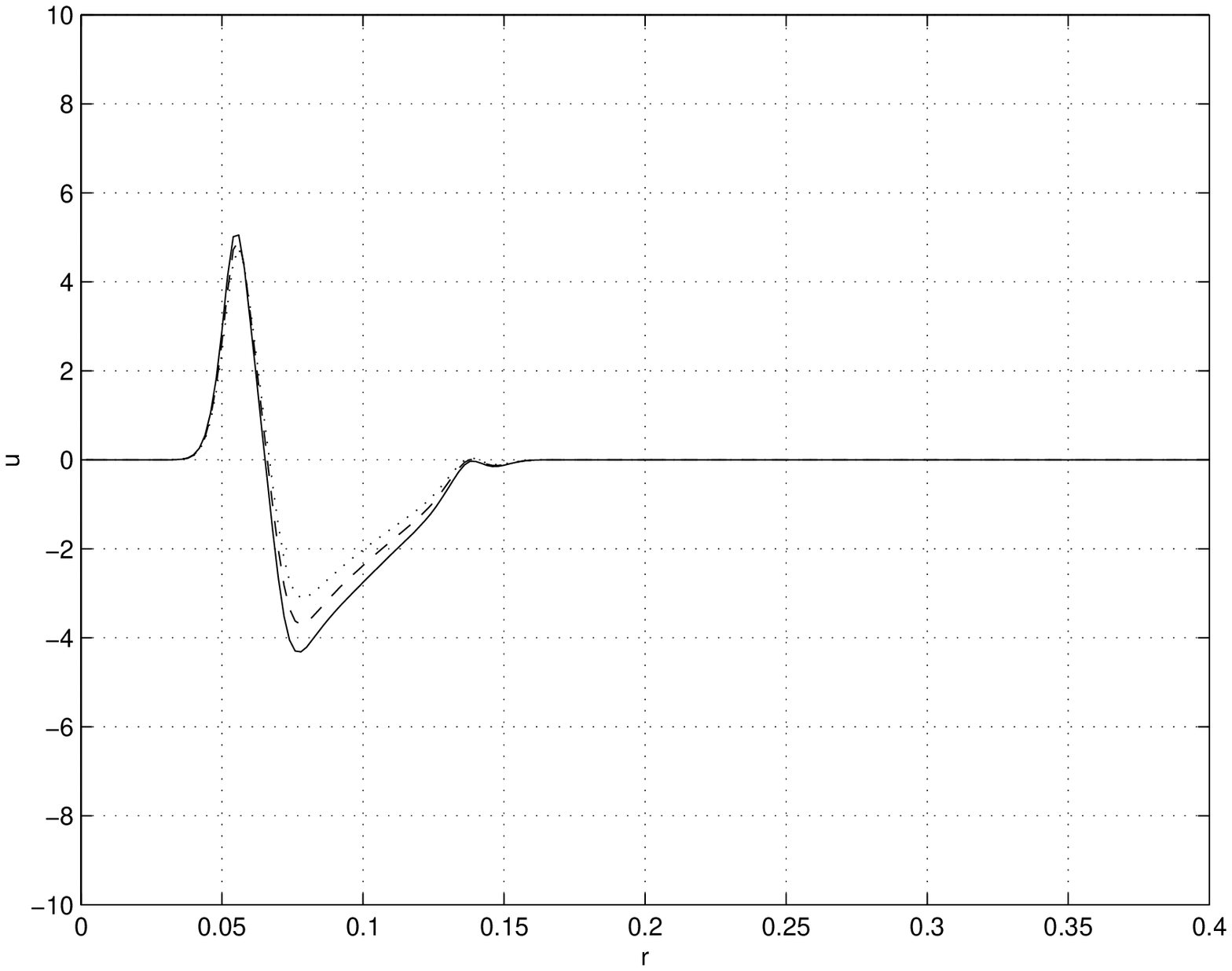} \\
\scriptsize{$t = 0.08$} &
\scriptsize{$t = 0.12$} \\
\includegraphics[width=0.45\textwidth]{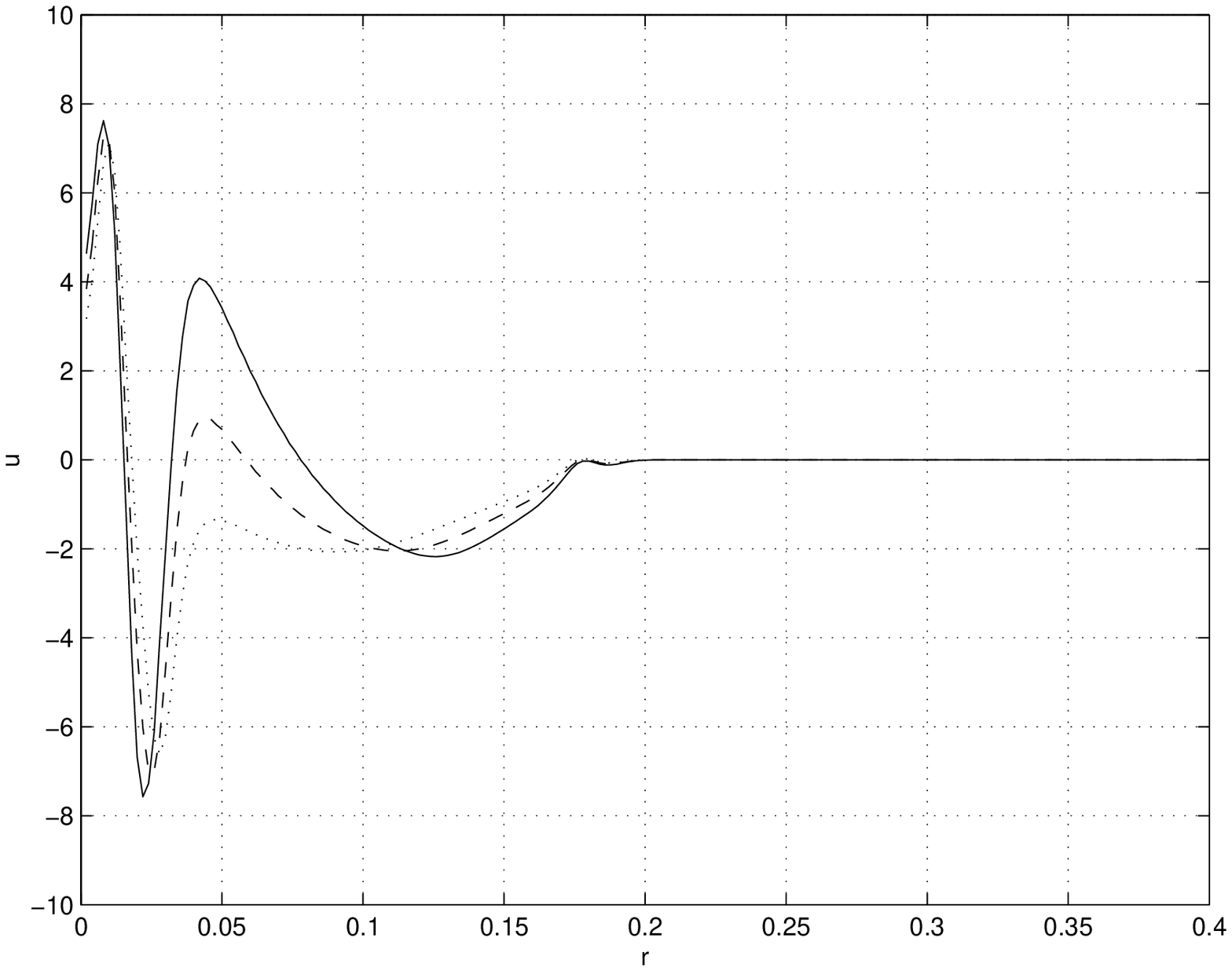} &
\includegraphics[width=0.45\textwidth]{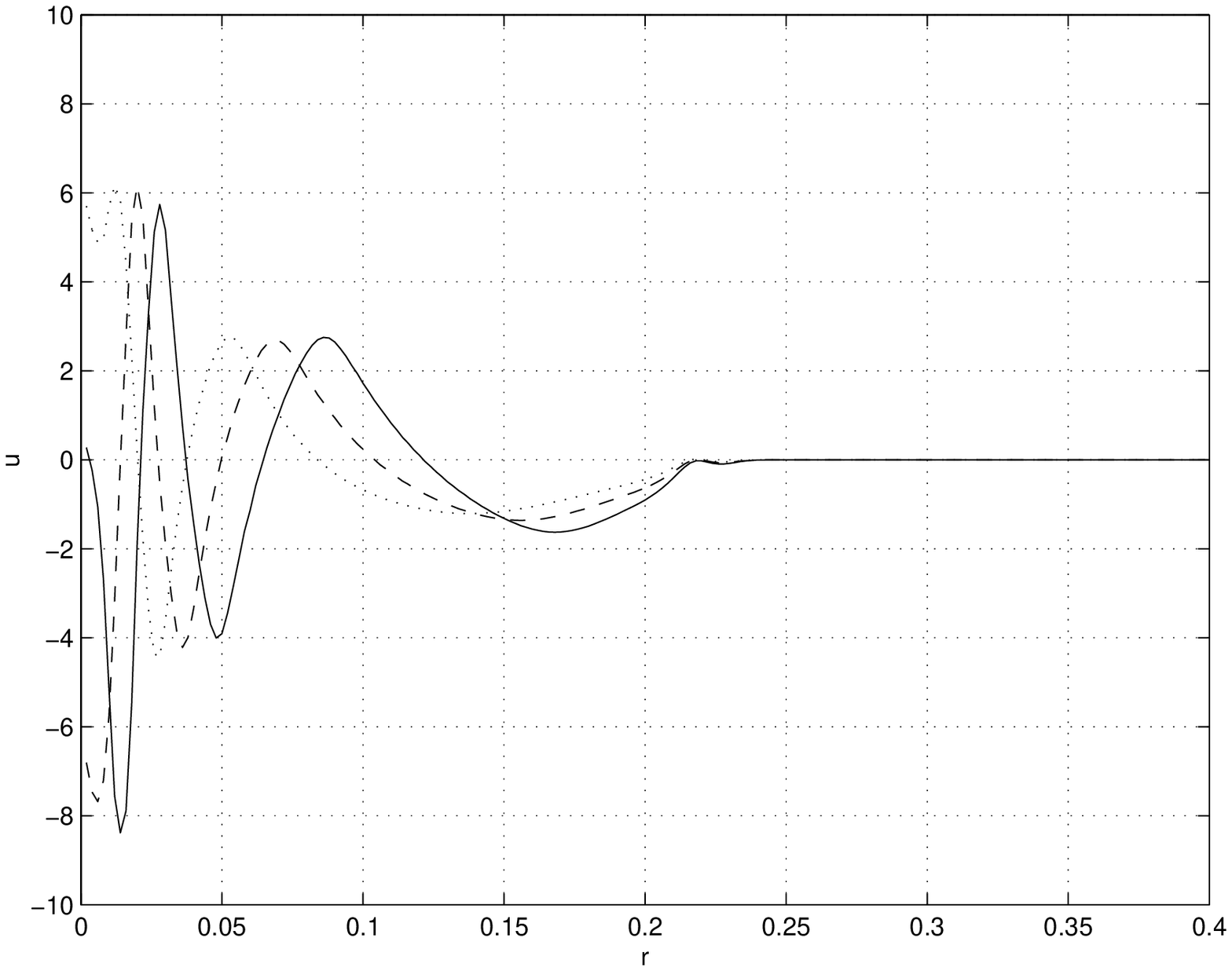} \\
\scriptsize{$t = 0.16$} &
\scriptsize{$t = 0.2$} \\
\includegraphics[width=0.45\textwidth]{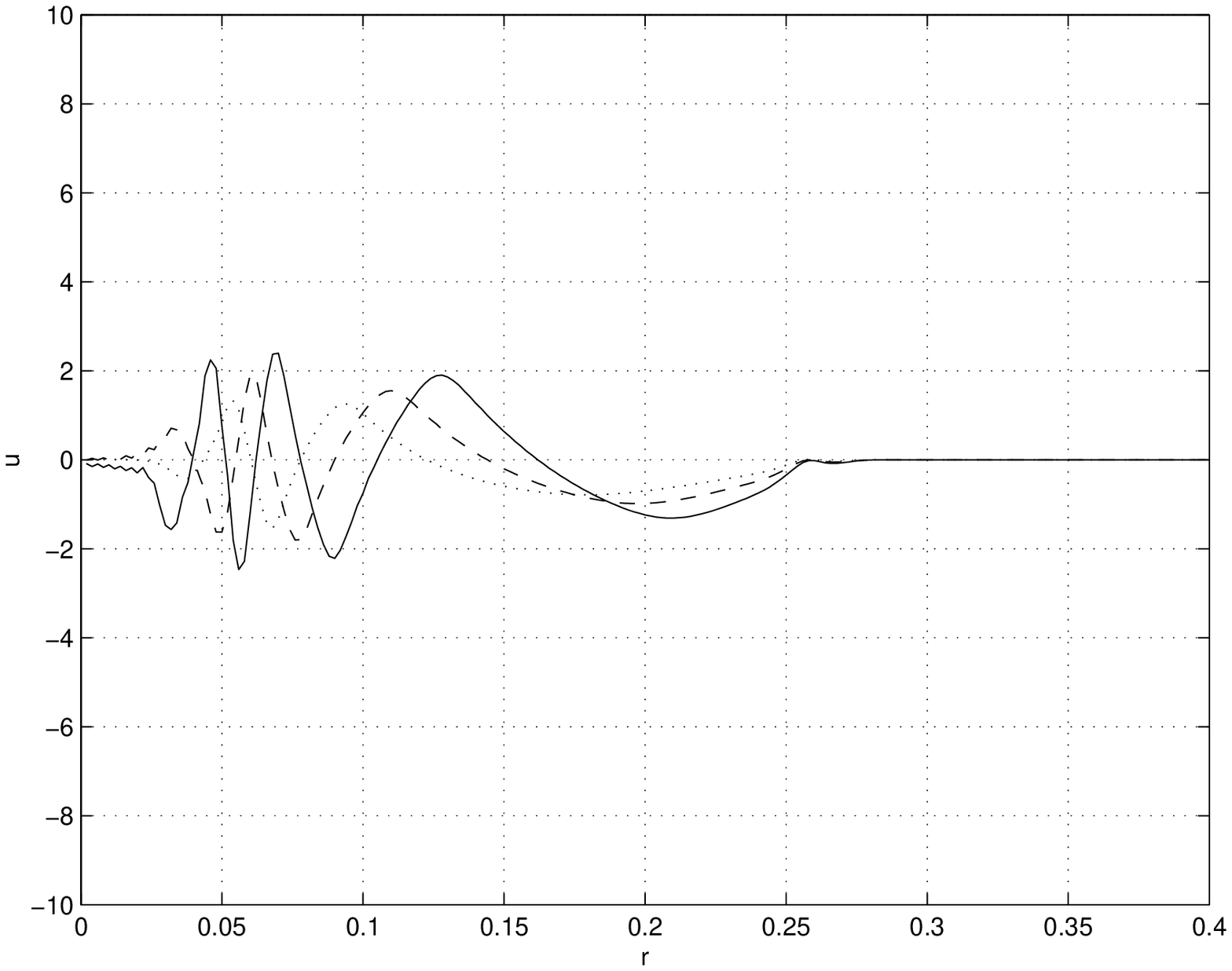} &
\includegraphics[width=0.45\textwidth]{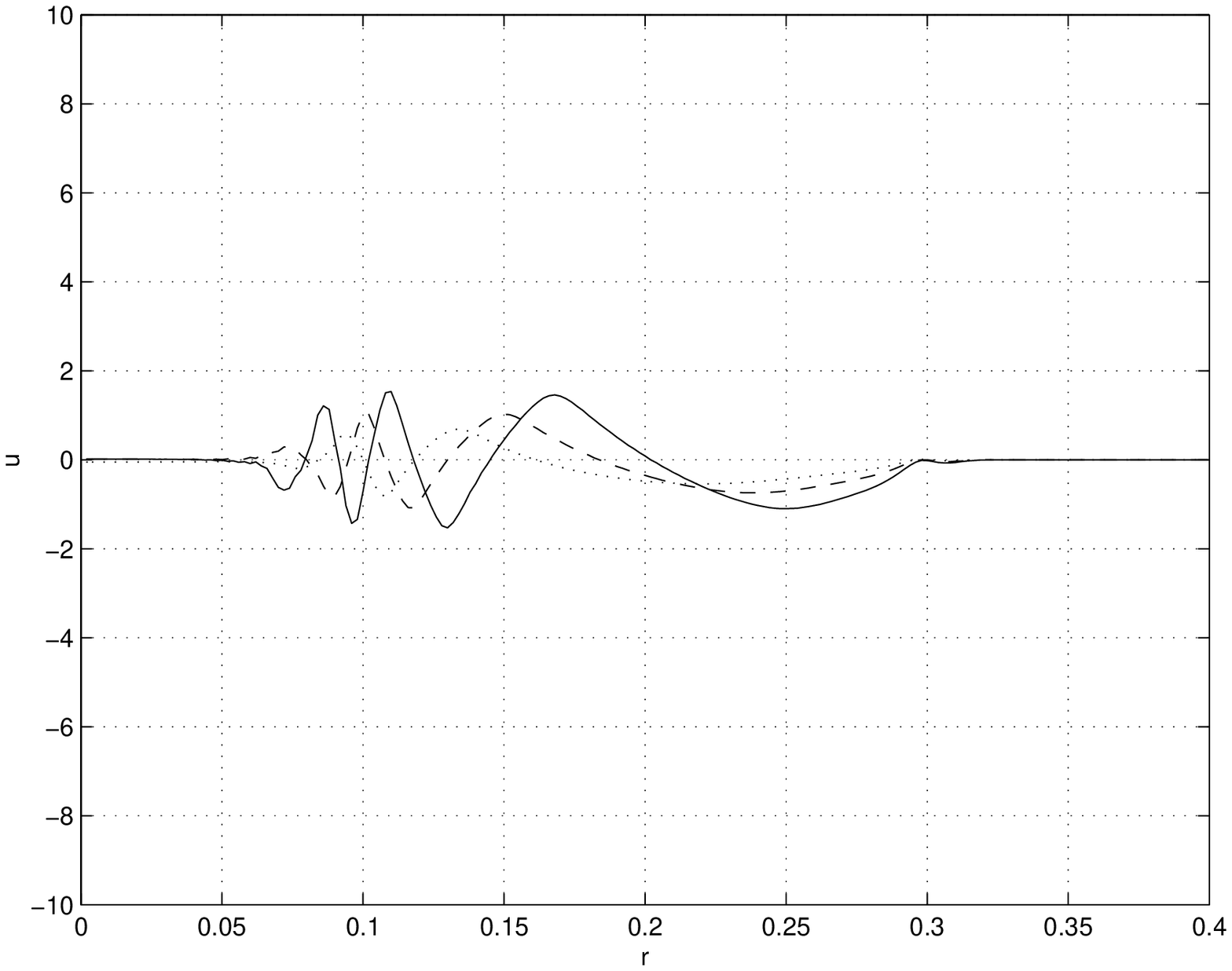} \\
\end{tabular}}
\caption{Approximate radial solutions of (\ref{paperproblem}) at
successive times for $\beta = 0$ and values of $\gamma = 0$ (solid), $\gamma = 5$ (dashed)
and $\gamma = 10$ (dotted), $G ^\prime (u) = u ^7$, and initial
data $\phi (r) = h (r)$, $\psi (r) = h ^\prime (r) + h (r) / r$.
\label{Fig4-2}} %
\end{figure}

\begin{figure}[tcb]
\centerline{
\begin{tabular}{cc}
\scriptsize{$G ^\prime (u) = 0$} &
\scriptsize{$G ^\prime (u) = u ^3$} \\
\includegraphics[width=0.45\textwidth]{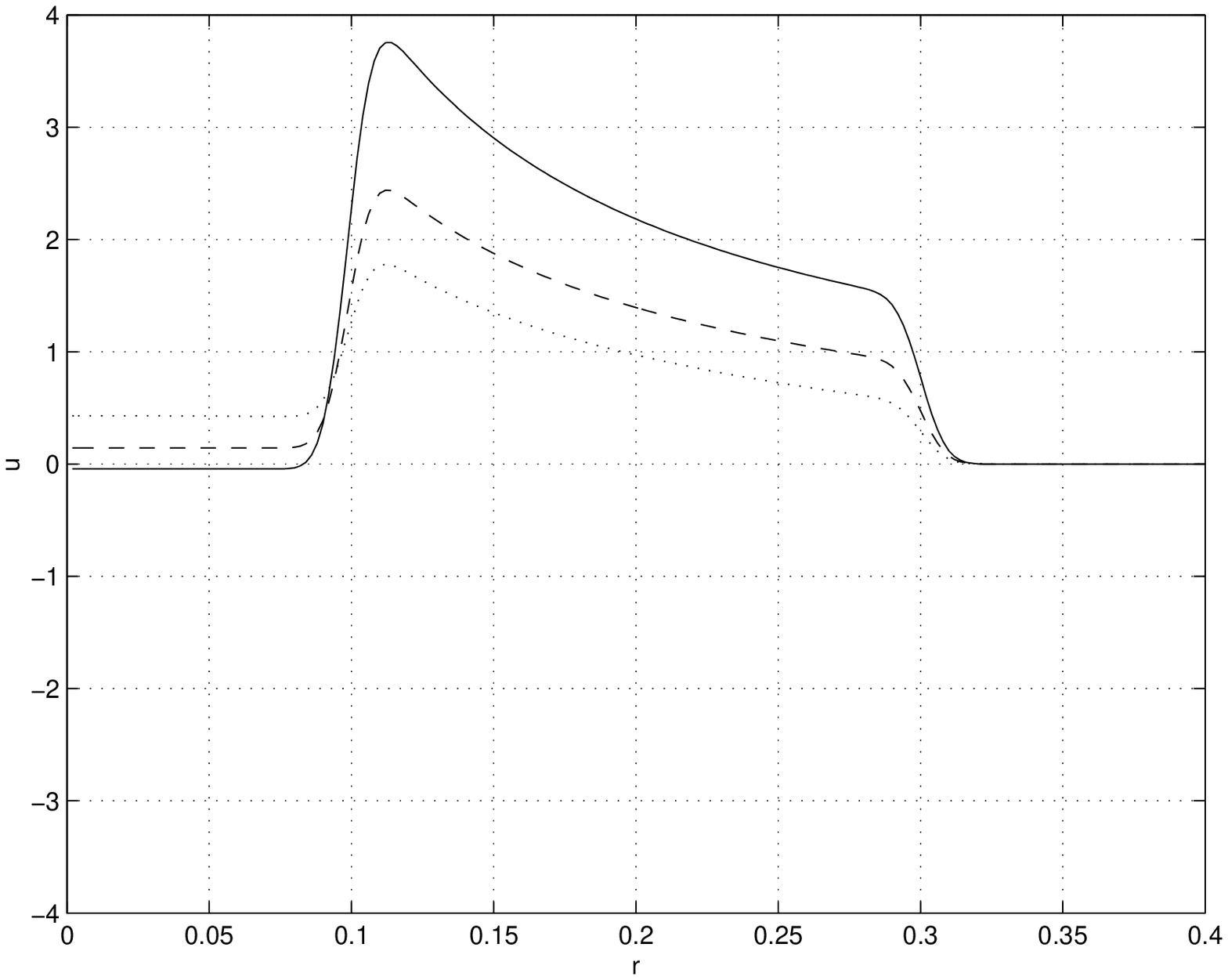} &
\includegraphics[width=0.45\textwidth]{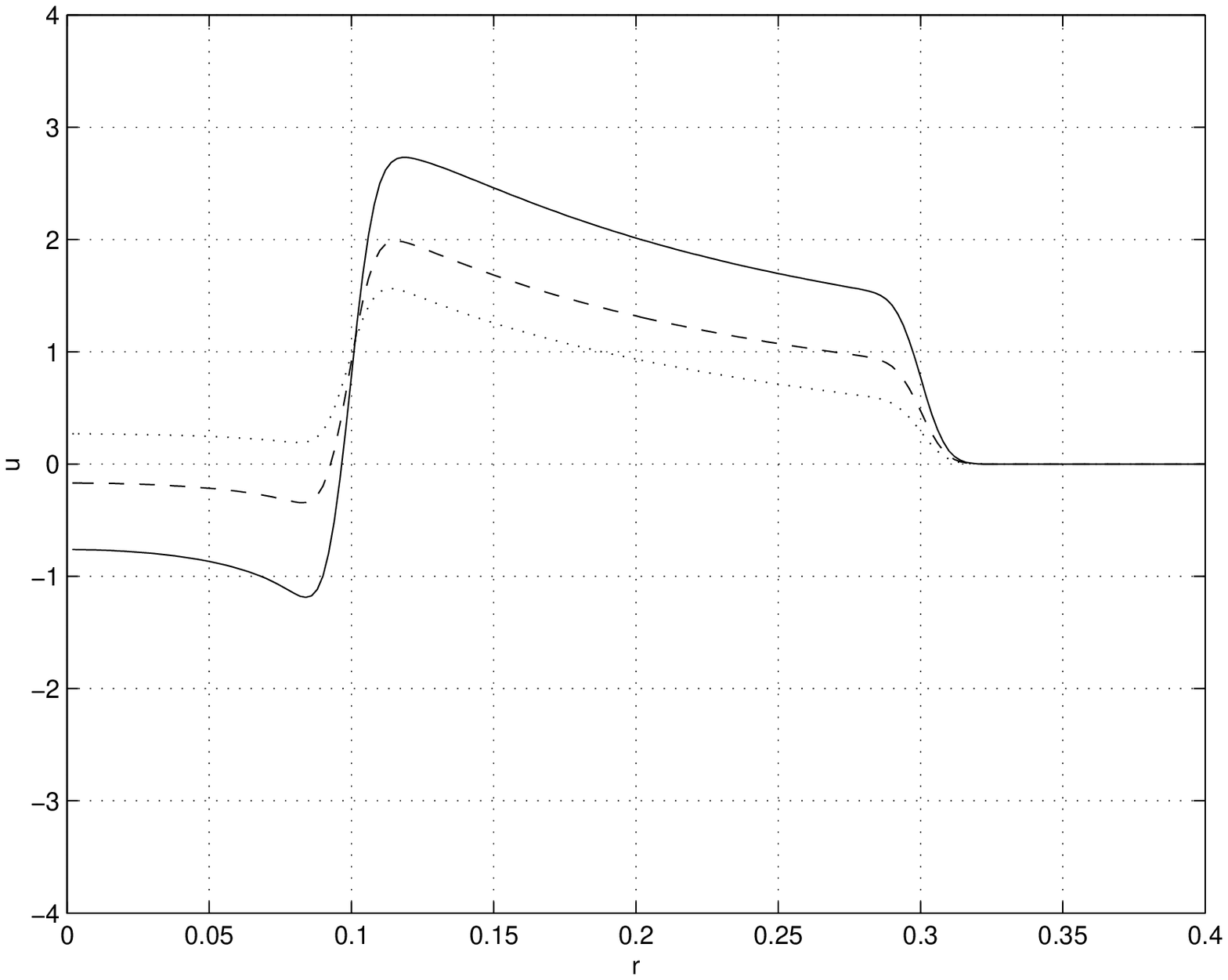} \\
\scriptsize{$G ^\prime (u) = u ^5$} &
\scriptsize{$G ^\prime (u) = u ^7$} \\
\includegraphics[width=0.45\textwidth]{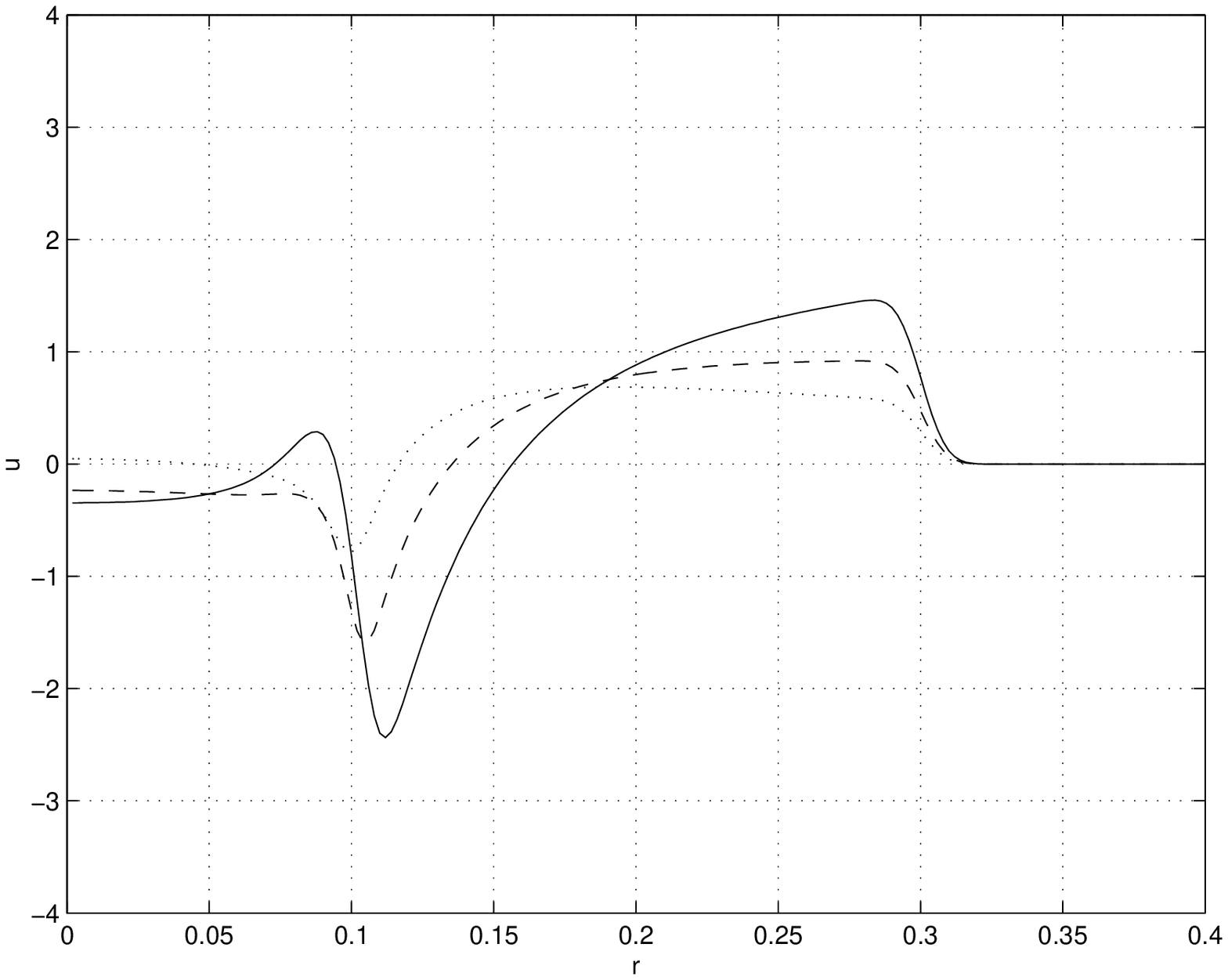} &
\includegraphics[width=0.45\textwidth]{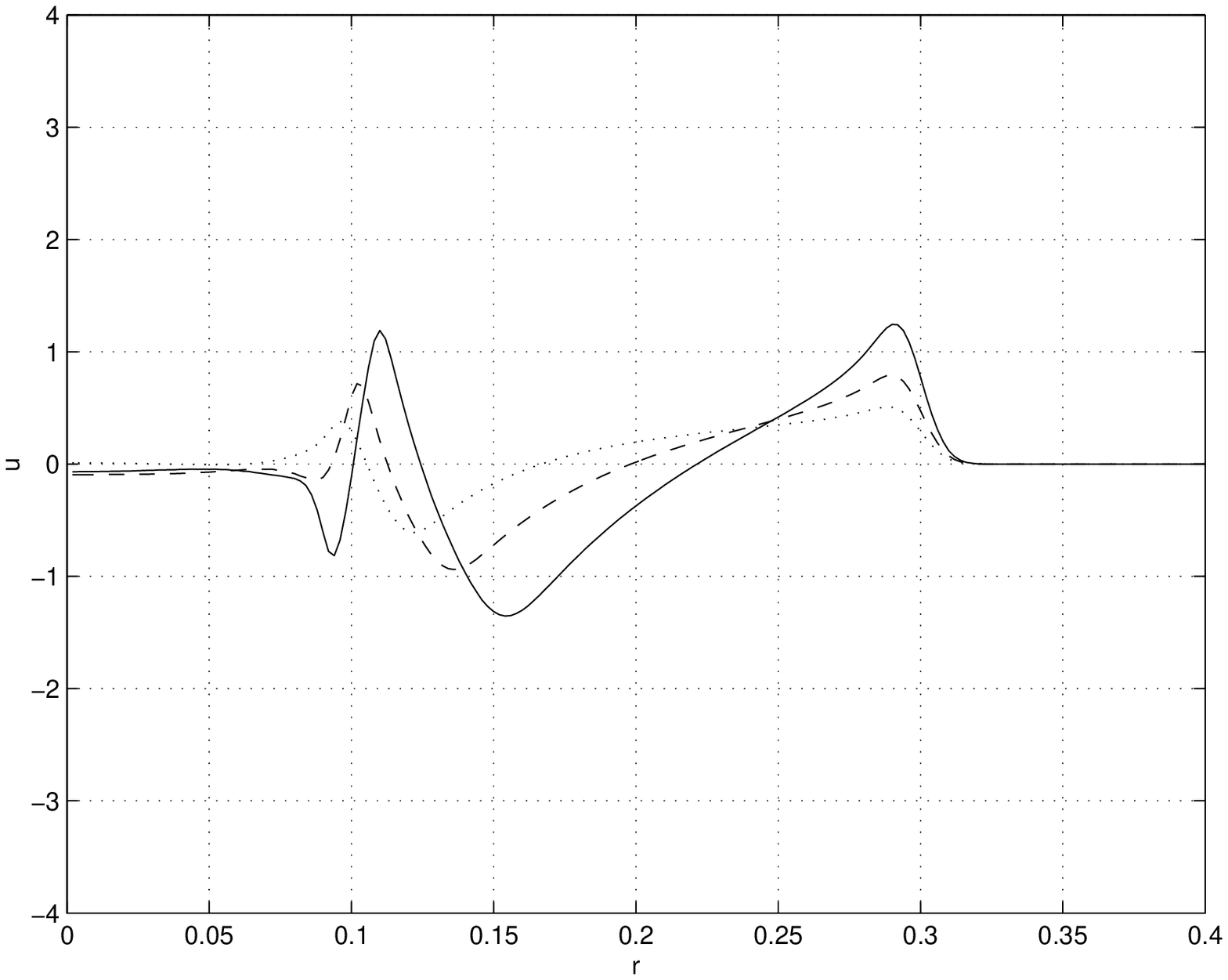} \\
\scriptsize{$G ^\prime (u) = u ^9$} &
\scriptsize{$G ^\prime (u) = \sin (5u) - 5u$} \\
\includegraphics[width=0.45\textwidth]{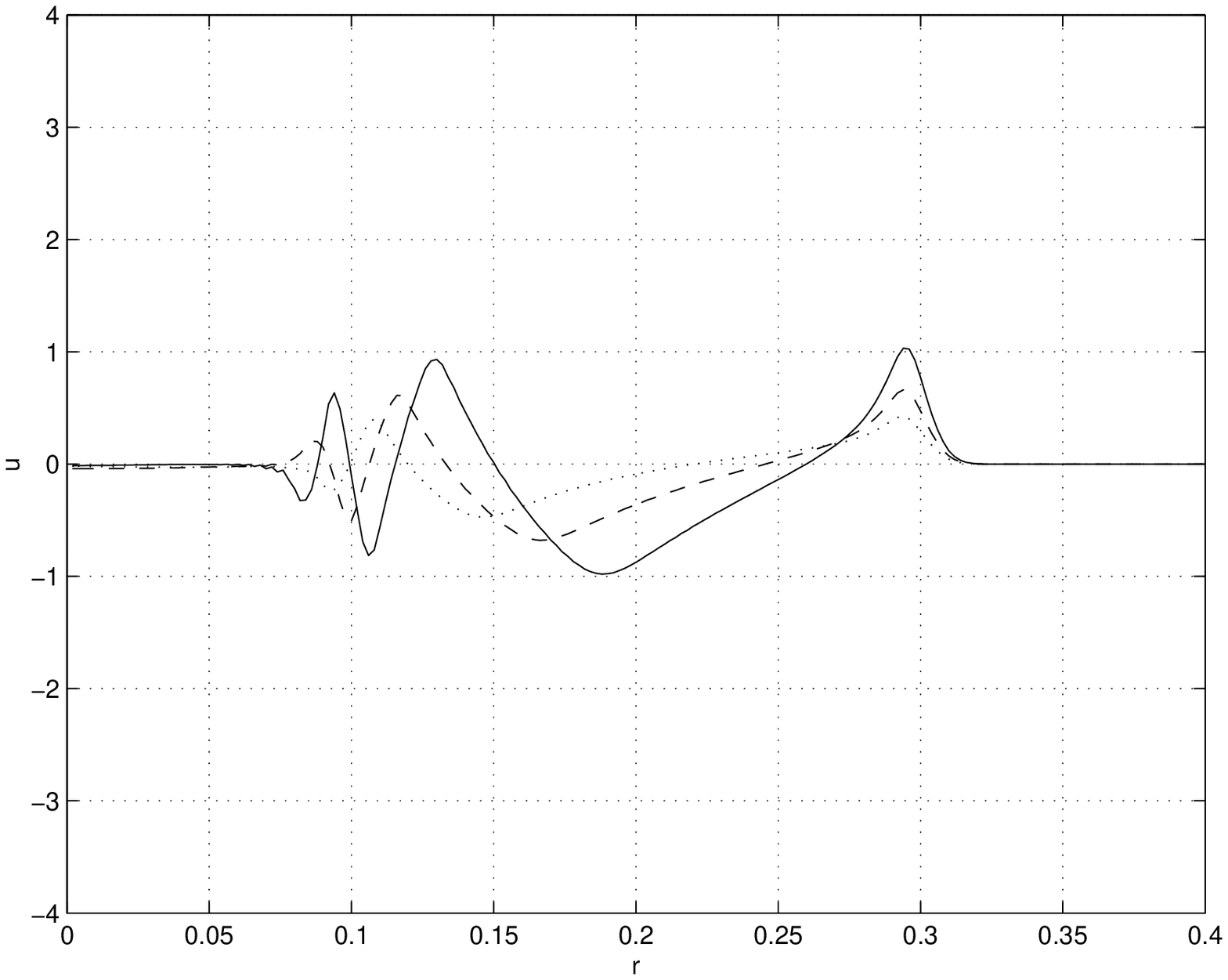} &
\includegraphics[width=0.45\textwidth]{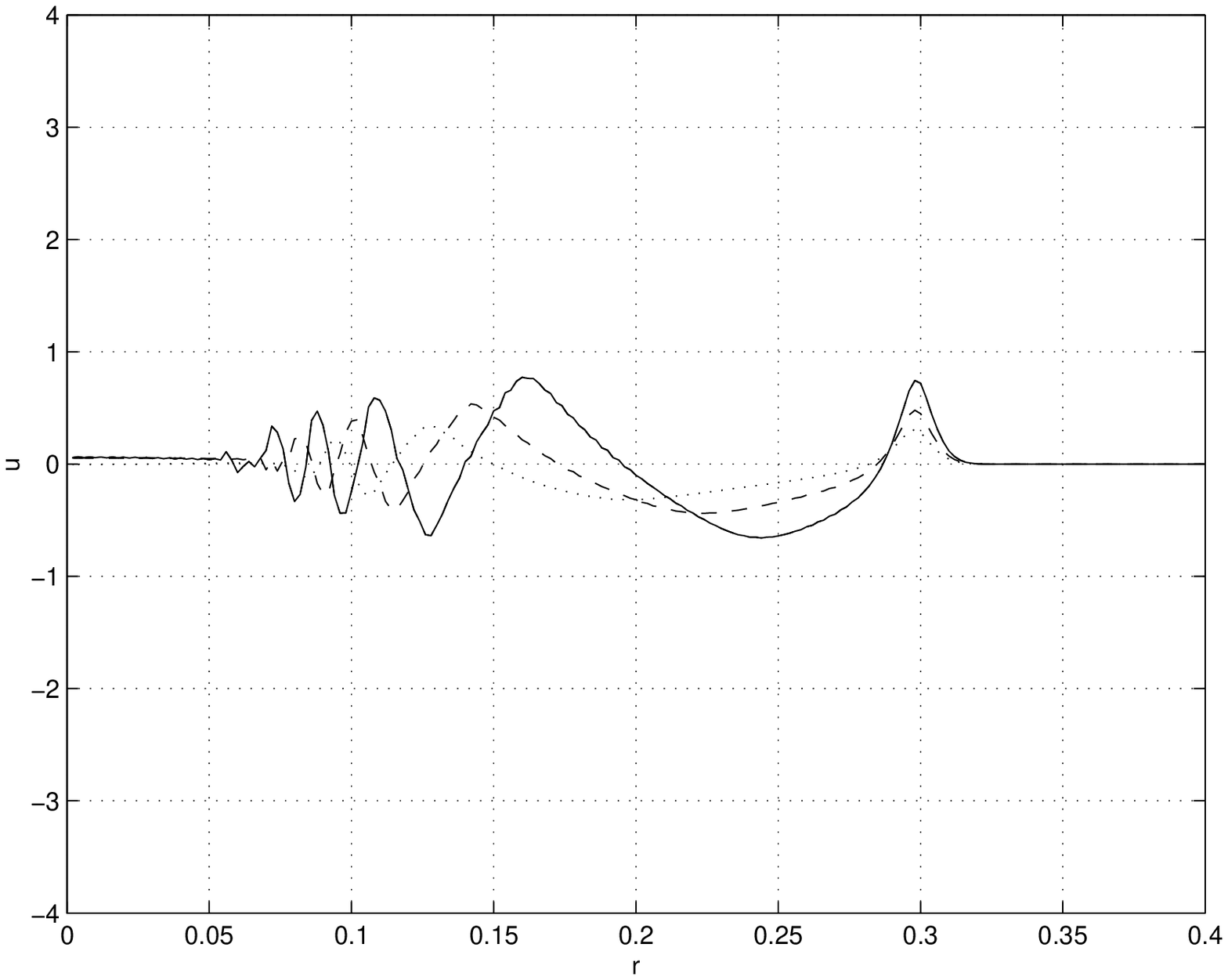} \\
\end{tabular}}
\caption{Approximate radial solutions of (\ref{paperproblem}) with
$G ^\prime (u)$ at $t = 0.2$, for initial data $\phi (r) = 0$ and
$\psi (r) = 100 h (r)$, $\beta = 0$ and $\gamma = 0$ (solid), $\gamma = 5$
(dashed) and $\gamma = 10$ (dotted).
\label{Fig4-3}} %
\end{figure}

\begin{figure}[tcb]
\centerline{
\begin{tabular}{cc}
\scriptsize{$t = 0$} &
\scriptsize{$t = 0.04$} \\
\includegraphics[width=0.5\textwidth]{Grafica11.eps} &
\includegraphics[width=0.5\textwidth]{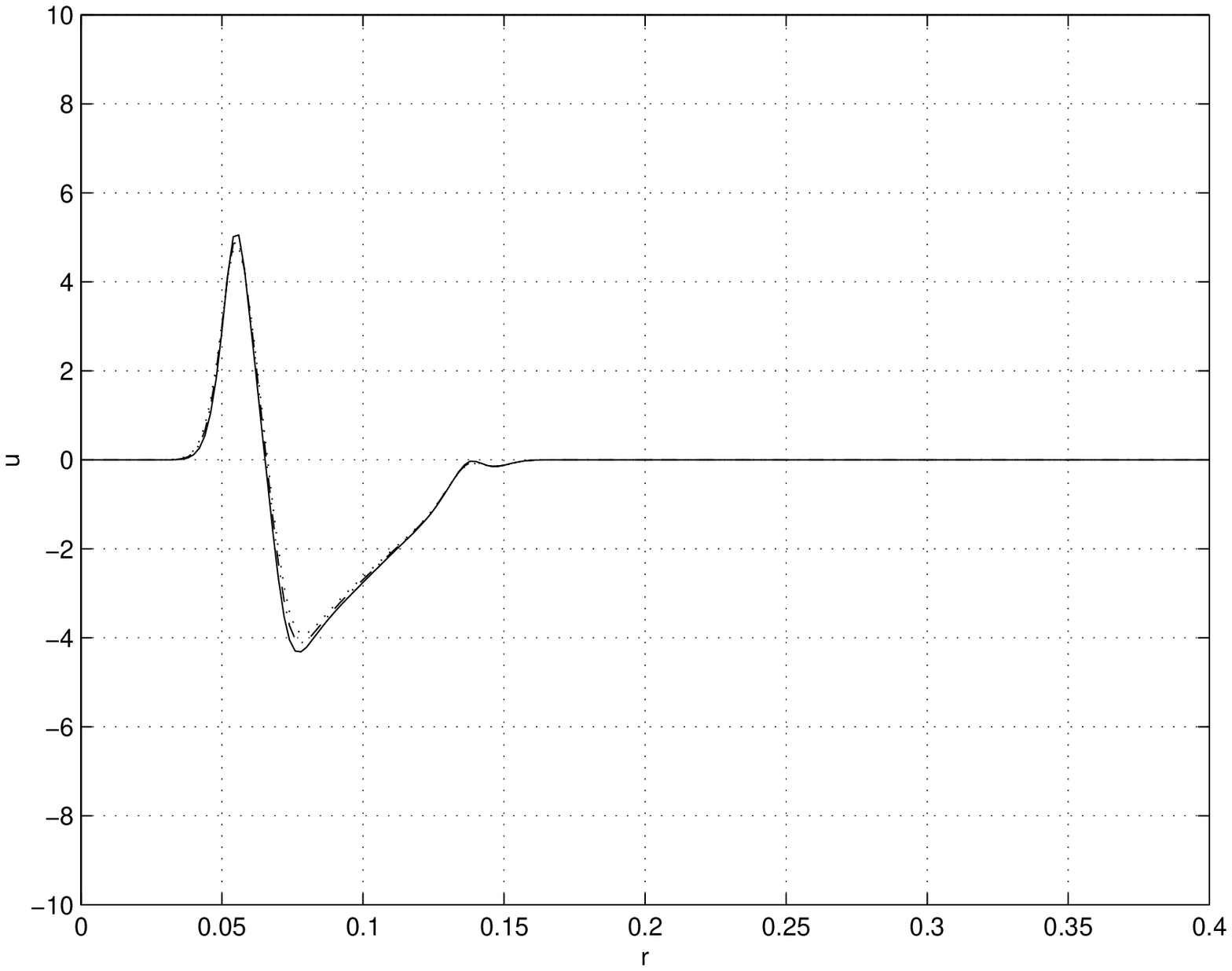} \\
\scriptsize{$t = 0.08$} &
\scriptsize{$t = 0.12$} \\
\includegraphics[width=0.5\textwidth]{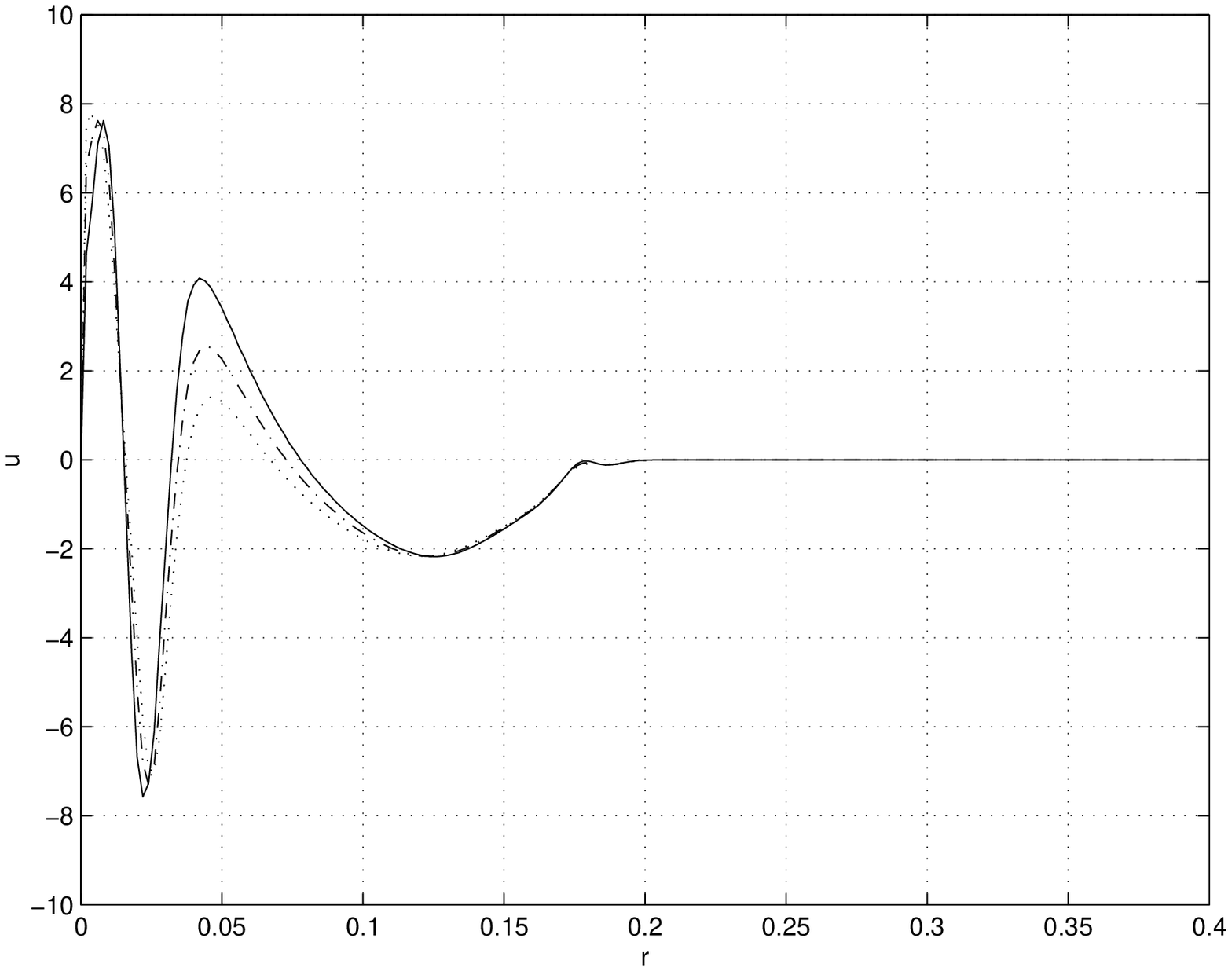} &
\includegraphics[width=0.5\textwidth]{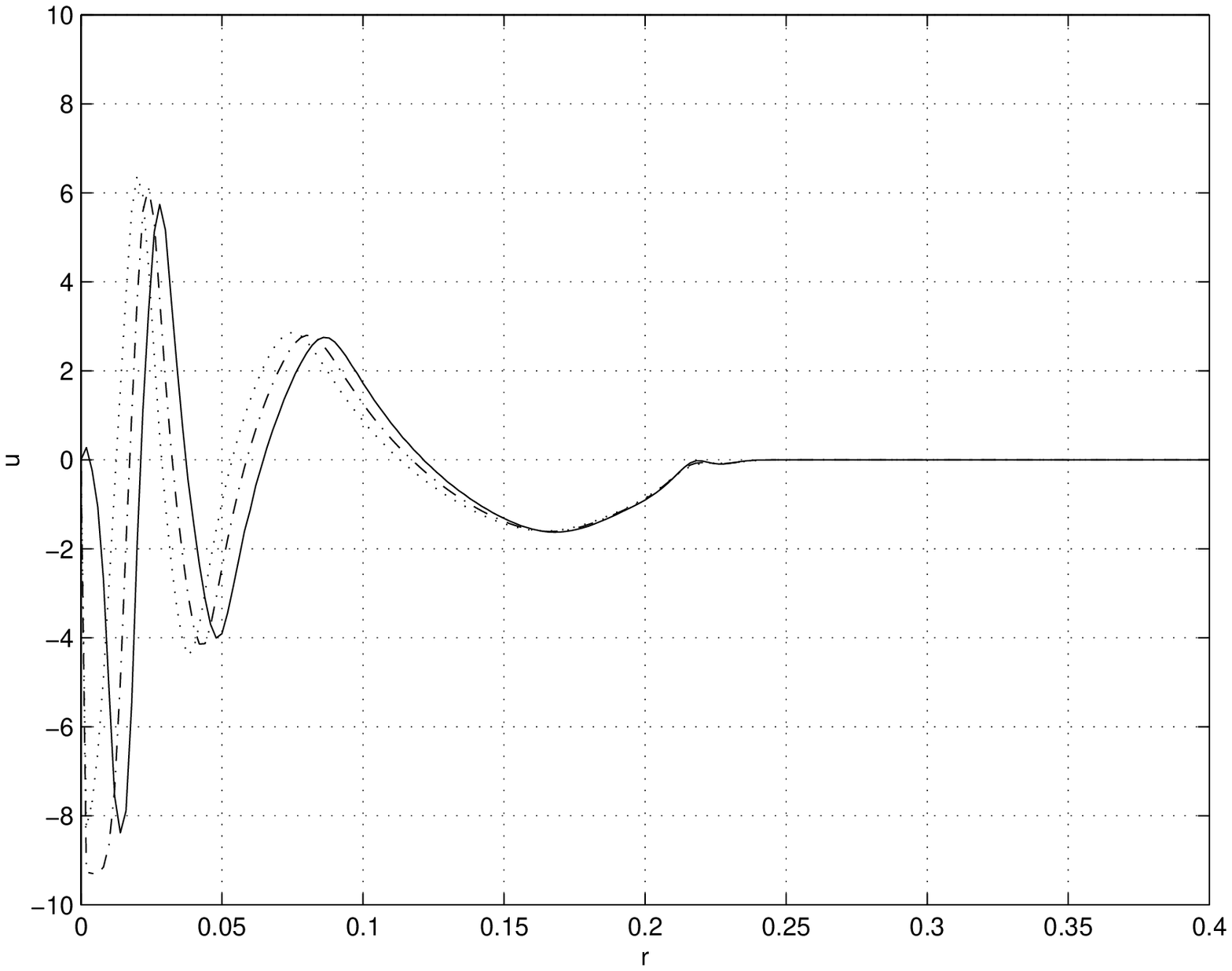} \\
\scriptsize{$t = 0.16$} &
\scriptsize{$t = 0.2$} \\
\includegraphics[width=0.5\textwidth]{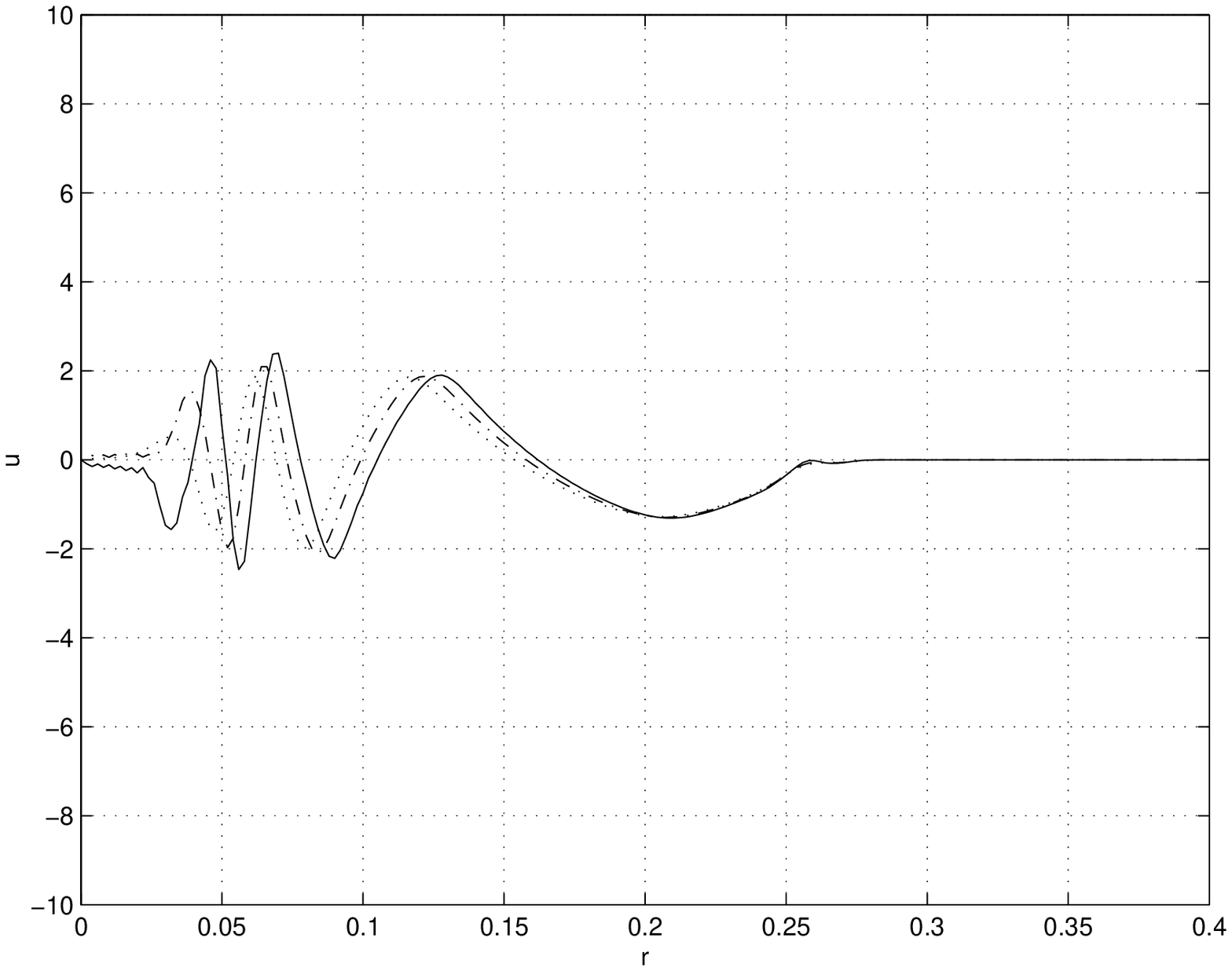} &
\includegraphics[width=0.5\textwidth]{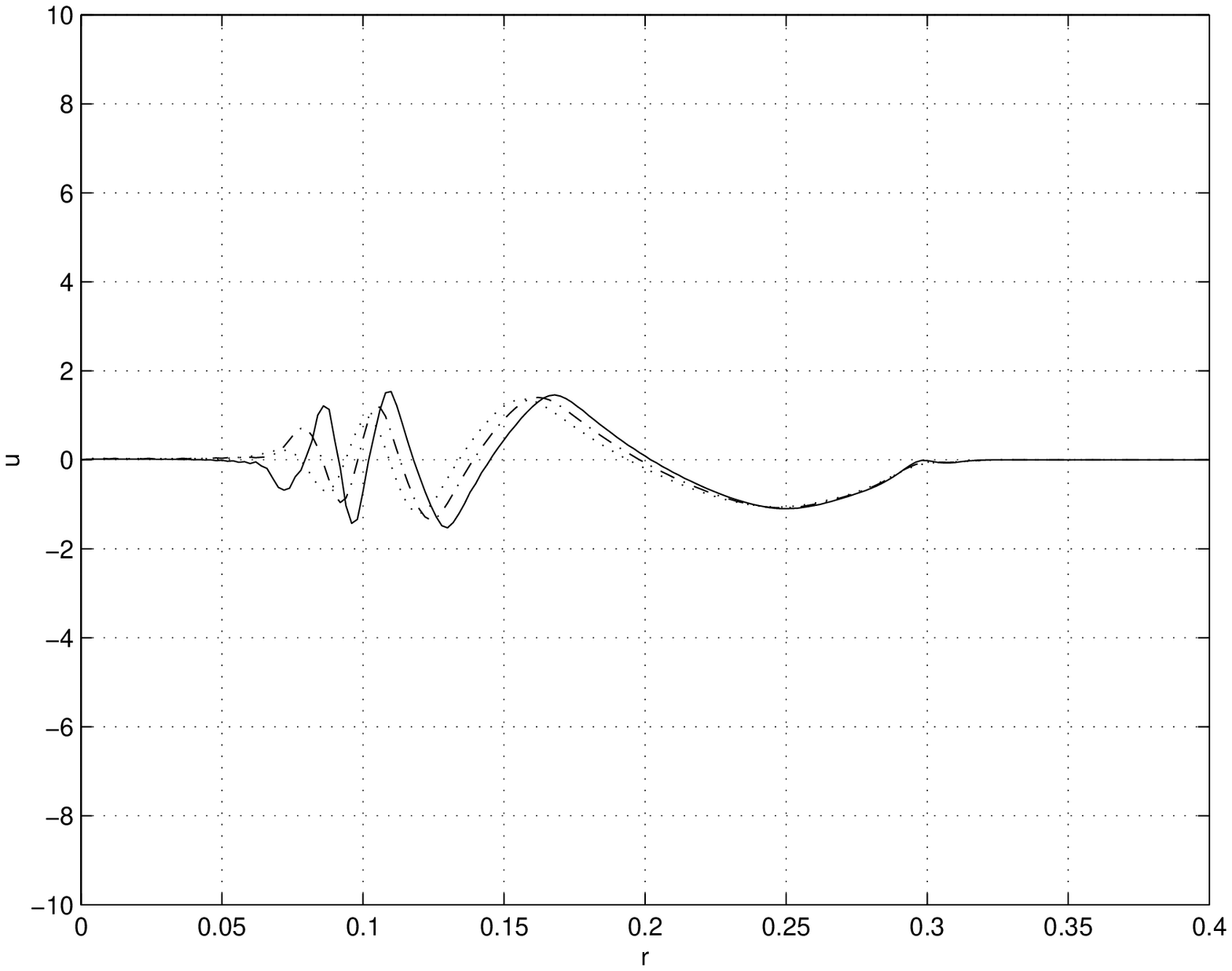} \\
\end{tabular}}
\caption{Approximate radial solutions at successive times of the
undamped (solid) and the damped nonlinear Klein-Gordon equation
with $\gamma=0$ and $\beta = 0$ (dashed), $\beta = 0.0001$ (dash-dotted) and
$\beta = 0.0002$, nonlinear term $G ^\prime (u) = u ^7$, and initial
data $\phi (r) = h (r)$ and $\psi (r) = h ^\prime (r) + h (r) /
r$.\label{Fig4-2-2}}
\end{figure}

\begin{figure}[tcb]
\centerline{
\begin{tabular}{cc}
\scriptsize{$G ^\prime (u) = u ^3$} &
\scriptsize{$G ^\prime (u) = u ^3$} \\
\includegraphics[width=0.45\textwidth]{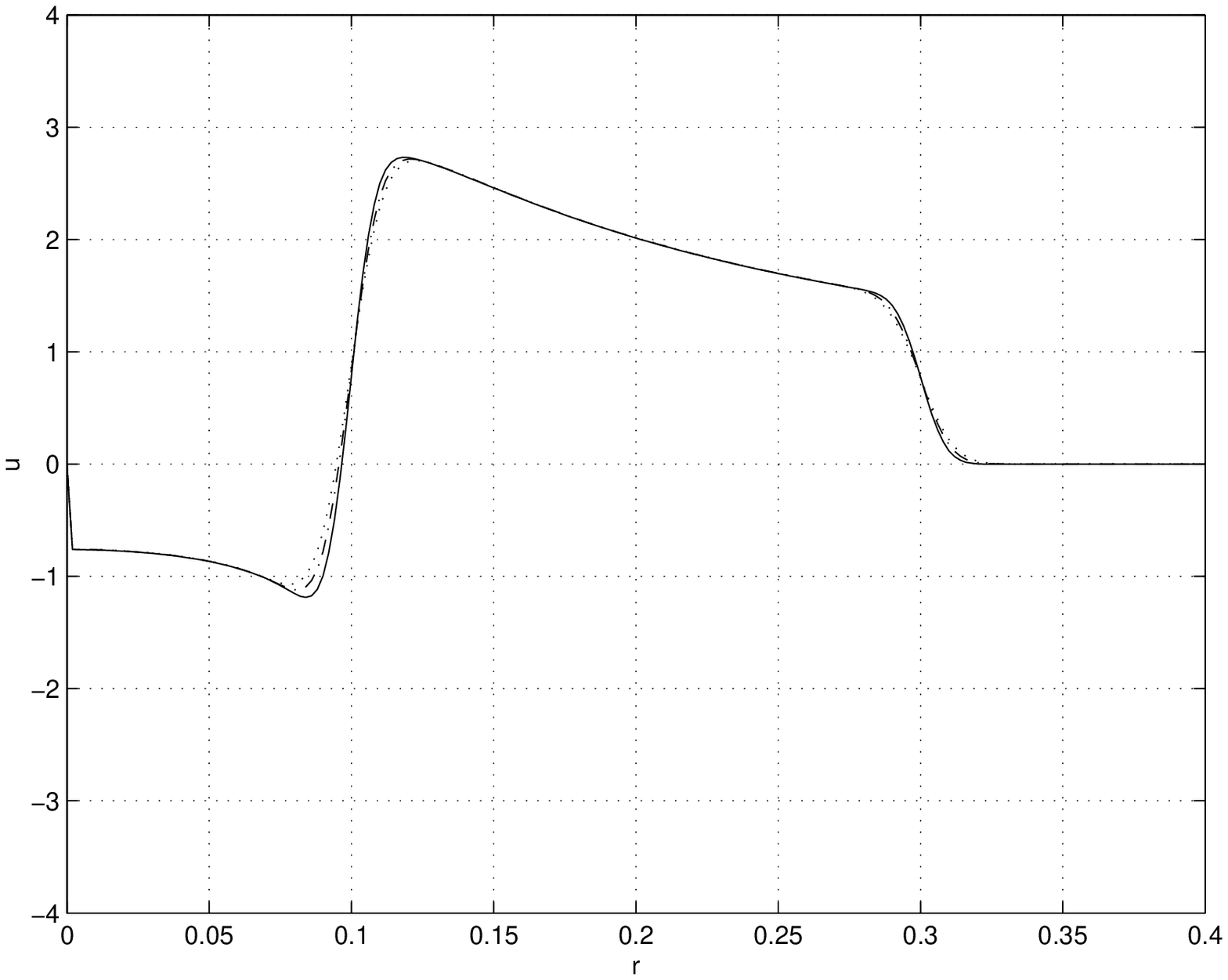} &
\includegraphics[width=0.45\textwidth]{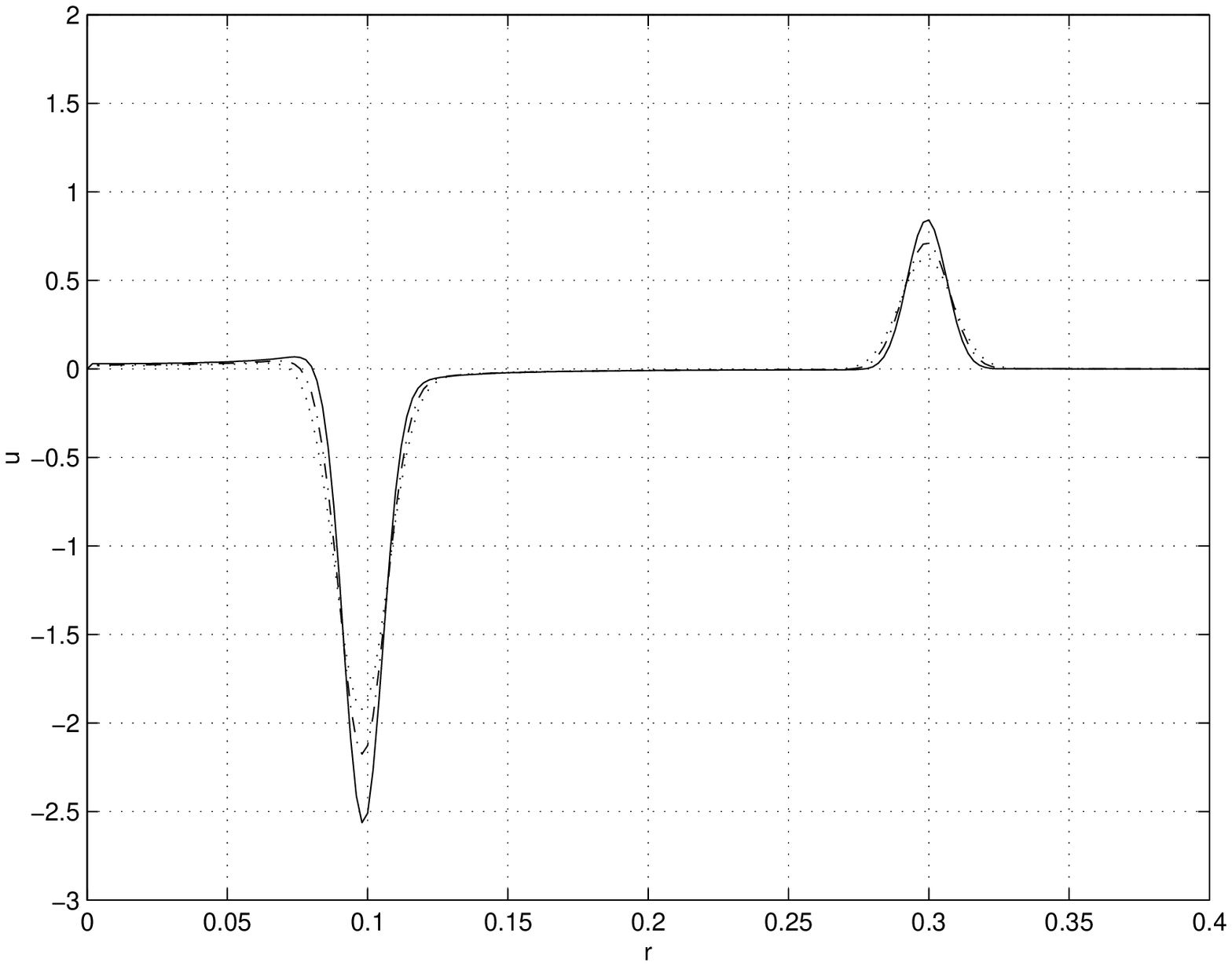} \\
\scriptsize{$G ^\prime (u) = u ^5$} &
\scriptsize{$G ^\prime (u) = u ^5$} \\
\includegraphics[width=0.45\textwidth]{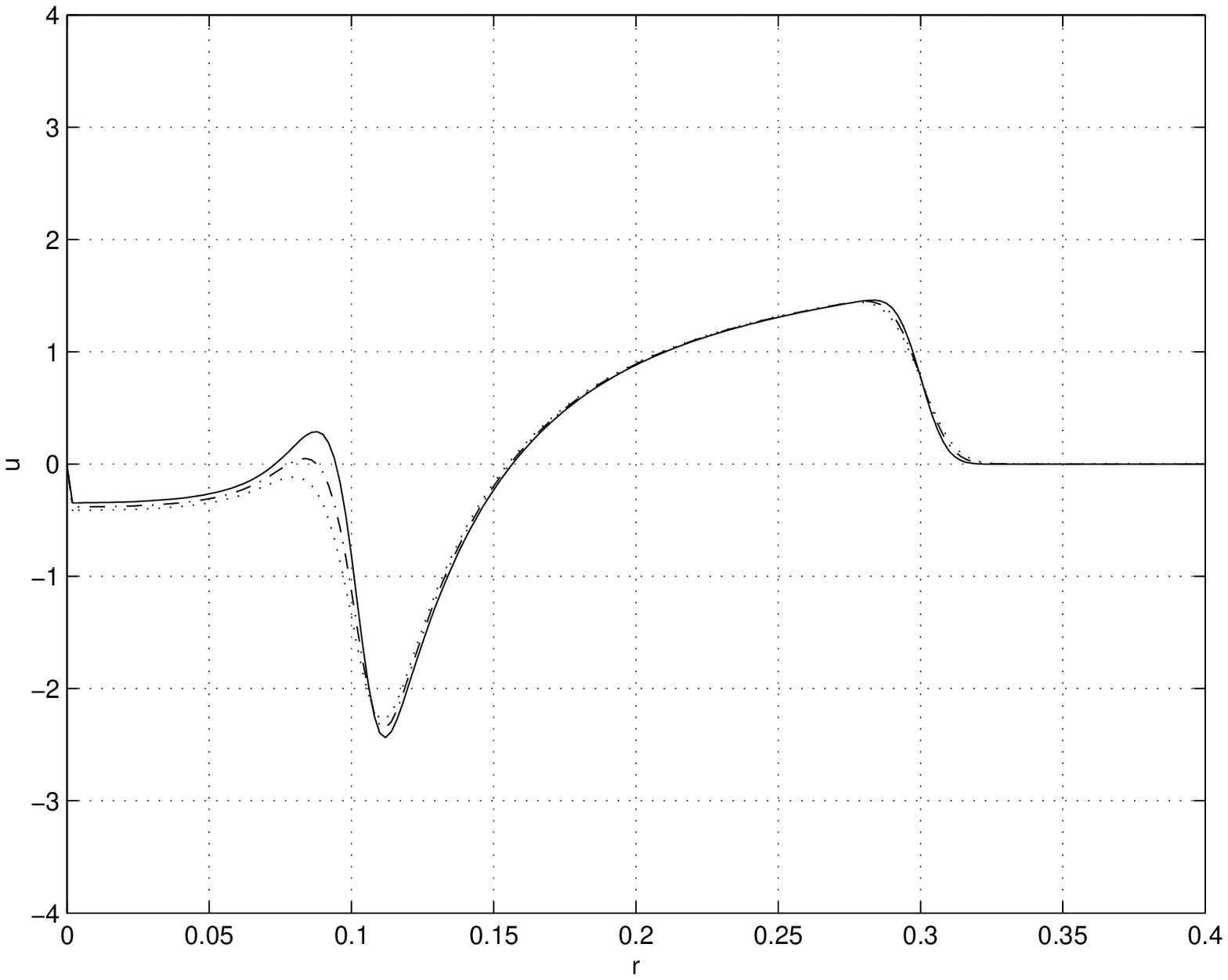} &
\includegraphics[width=0.45\textwidth]{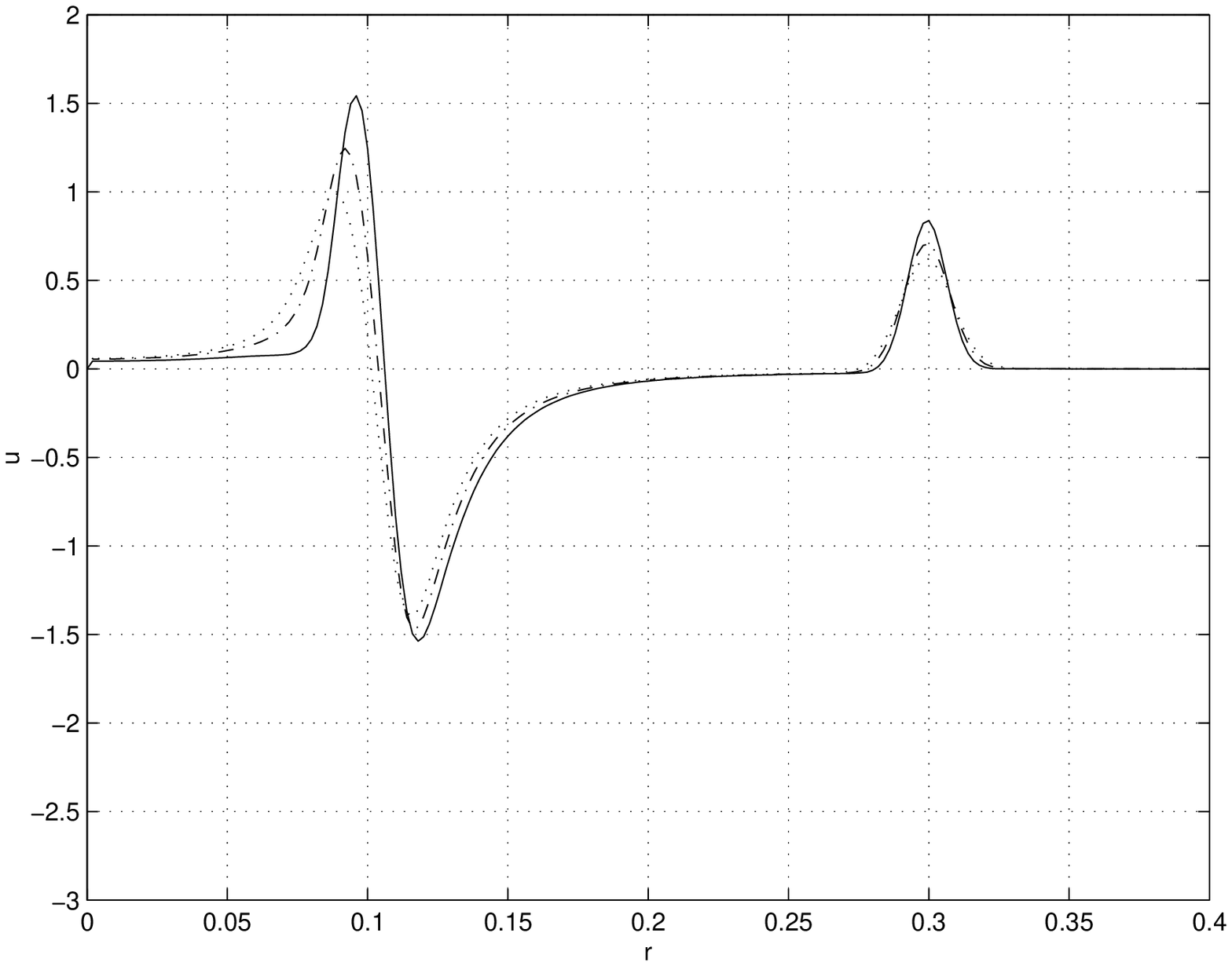} \\
\scriptsize{$G ^\prime (u) = u ^7$} &
\scriptsize{$G ^\prime (u) = u ^7$} \\
\includegraphics[width=0.45\textwidth]{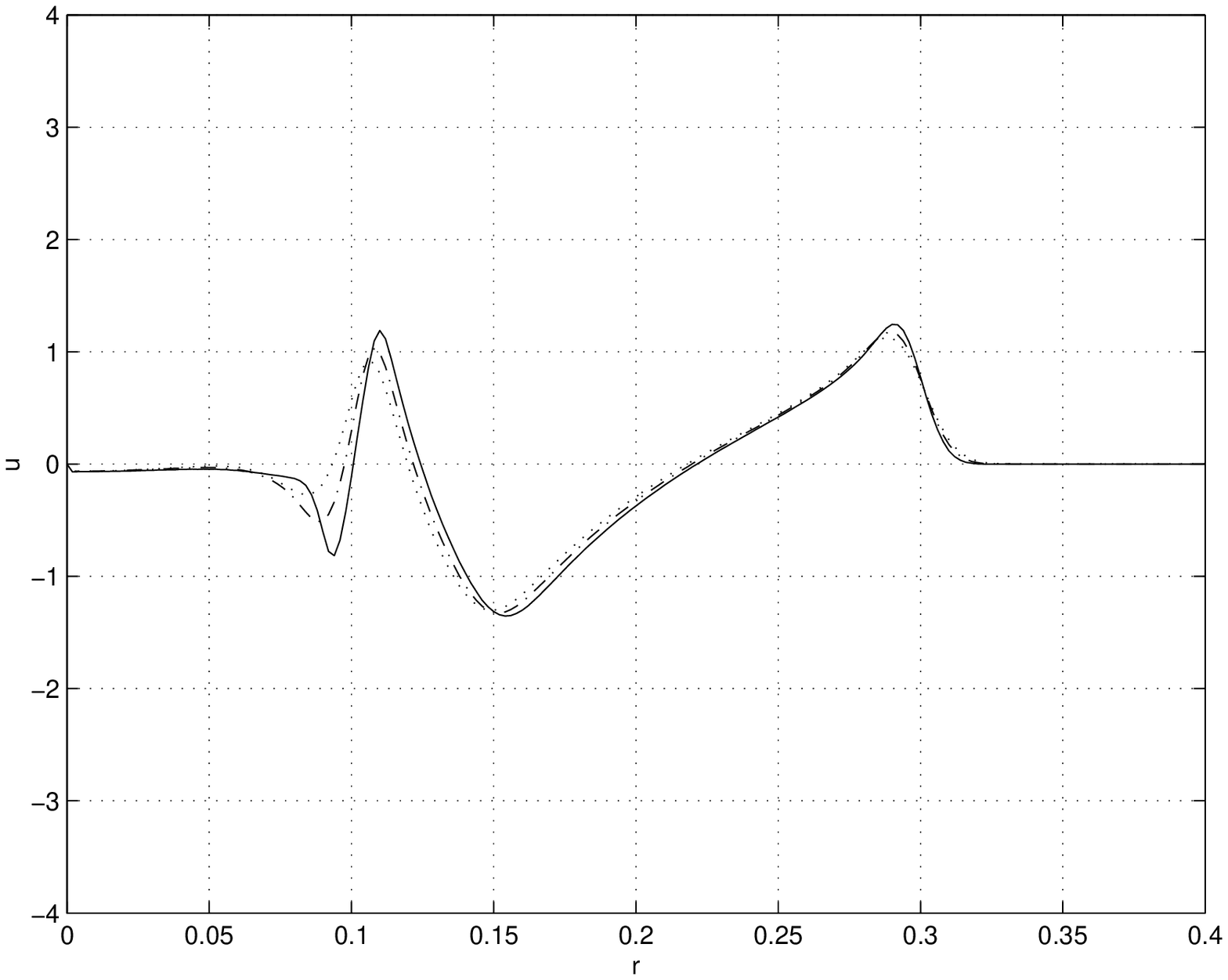} &
\includegraphics[width=0.45\textwidth]{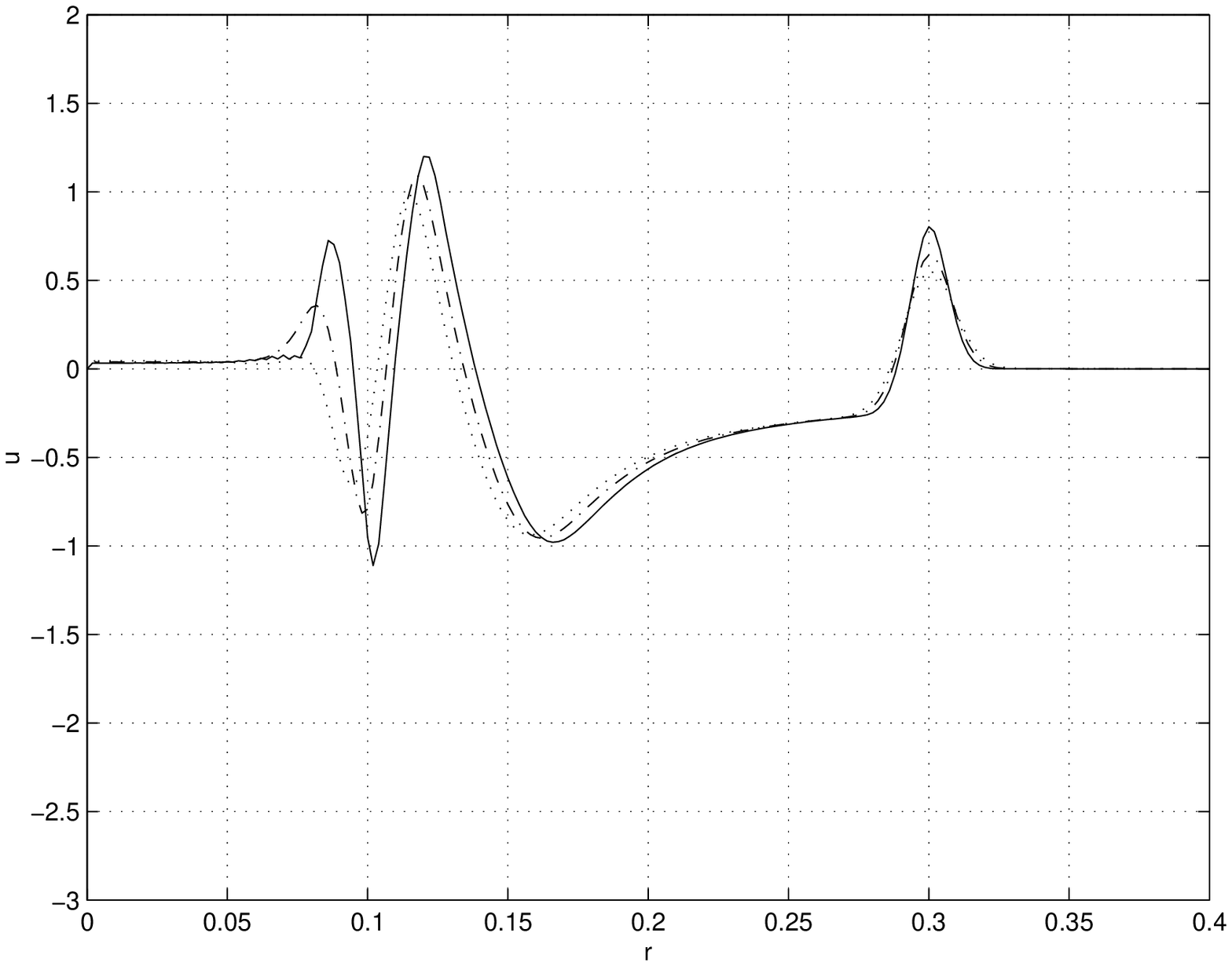} \\
\end{tabular}}
\caption{Approximate radial solutions of (\ref{paperproblem}) with
$G ^\prime (u)$ at $t = 0.2$, for initial data $\phi (r) = 0$,
$\psi (r) = 100 h (r)$ (left column) and $\phi (r) = h (r)$, $\psi
(r) = 0$ (right column), $\gamma = 0$ and $\beta = 0$ (solid),
$\beta = 0.0001$ (dashed) and $\beta = 0.0002$ (dotted). \label{Fig4-2-3-1}} %
\end{figure}

\begin{figure}[tcb]
\centerline{
\begin{tabular}{cc}
\scriptsize{$G ^\prime (u) = 0$} &
\scriptsize{$G ^\prime (u) = u ^3$} \\
\includegraphics[width=0.5\textwidth]{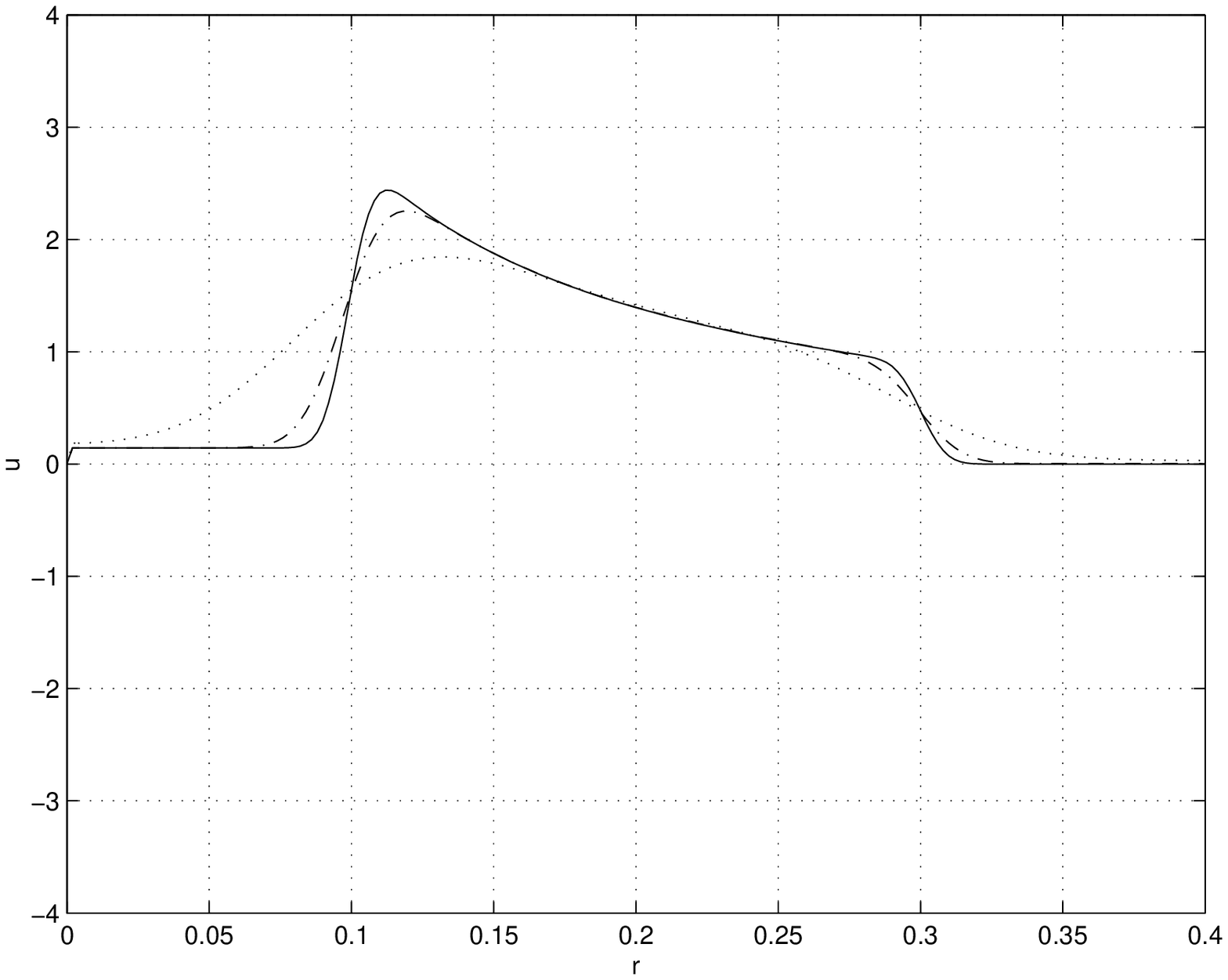} &
\includegraphics[width=0.5\textwidth]{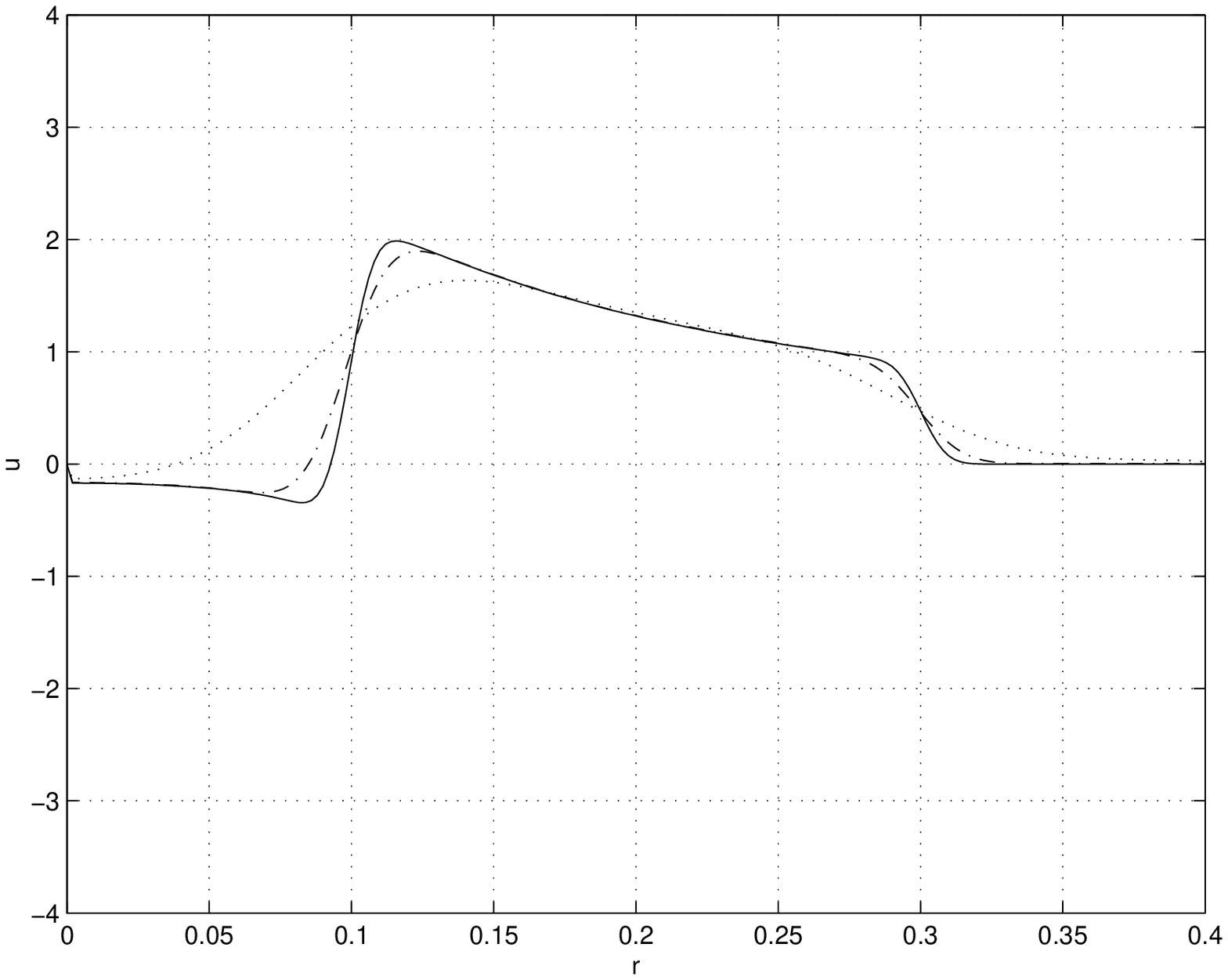} \\
\scriptsize{$G ^\prime (u) = u ^5$} &
\scriptsize{$G ^\prime (u) = u ^7$} \\
\includegraphics[width=0.5\textwidth]{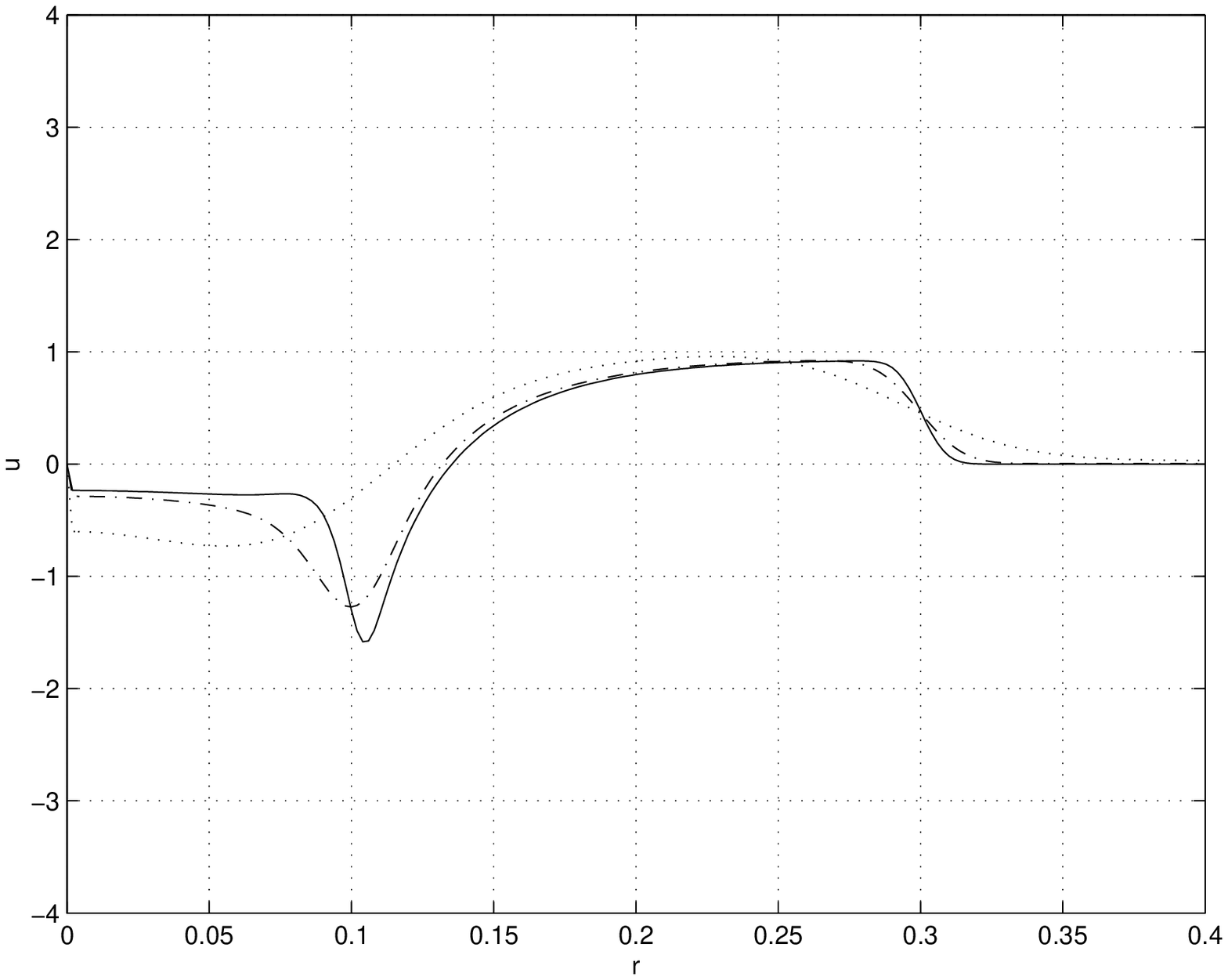} &
\includegraphics[width=0.5\textwidth]{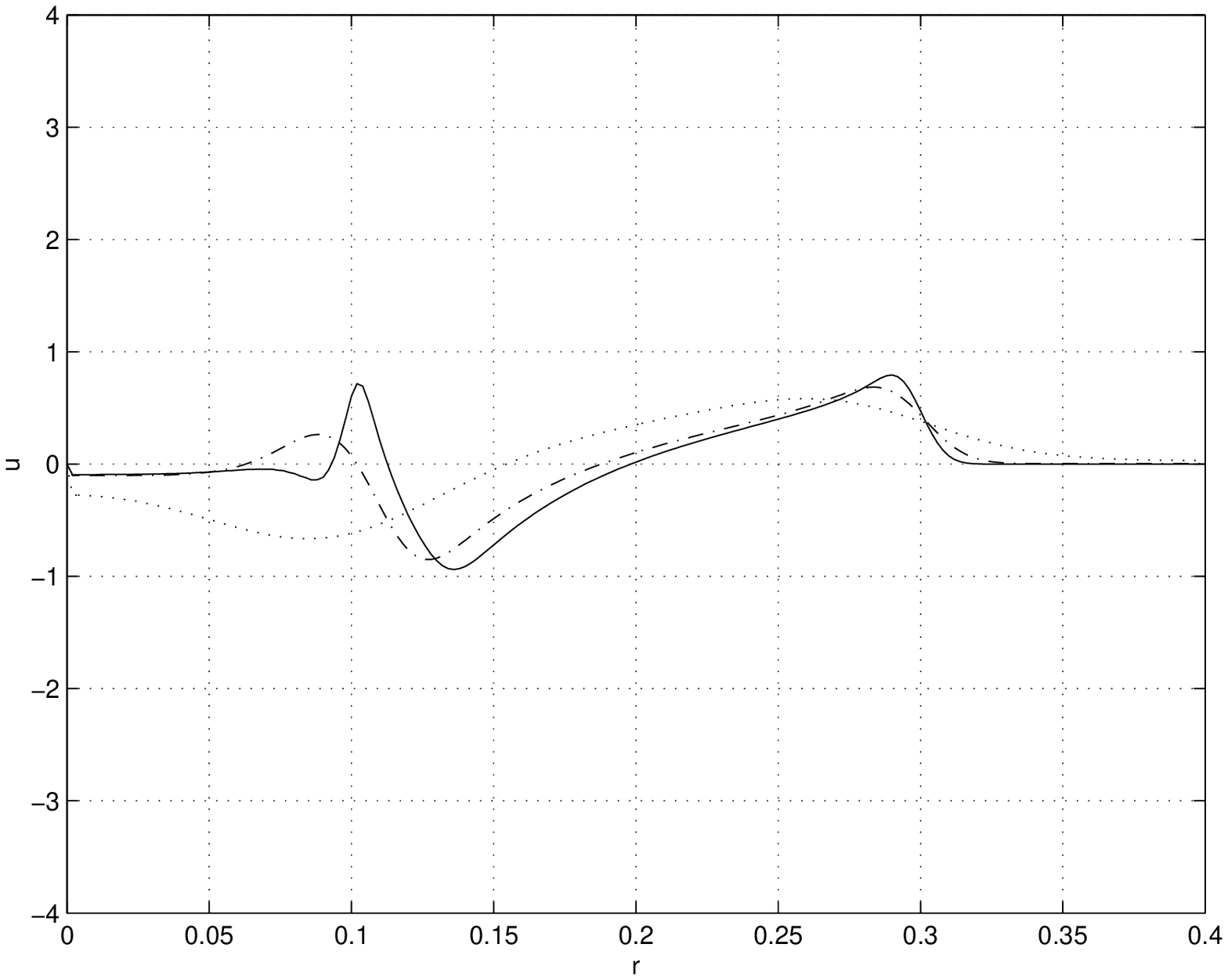} \\
\scriptsize{$G ^\prime (u) = u ^9$} &
\scriptsize{$G ^\prime (u) = \sinh (5 u) - 5 u$} \\
\includegraphics[width=0.5\textwidth]{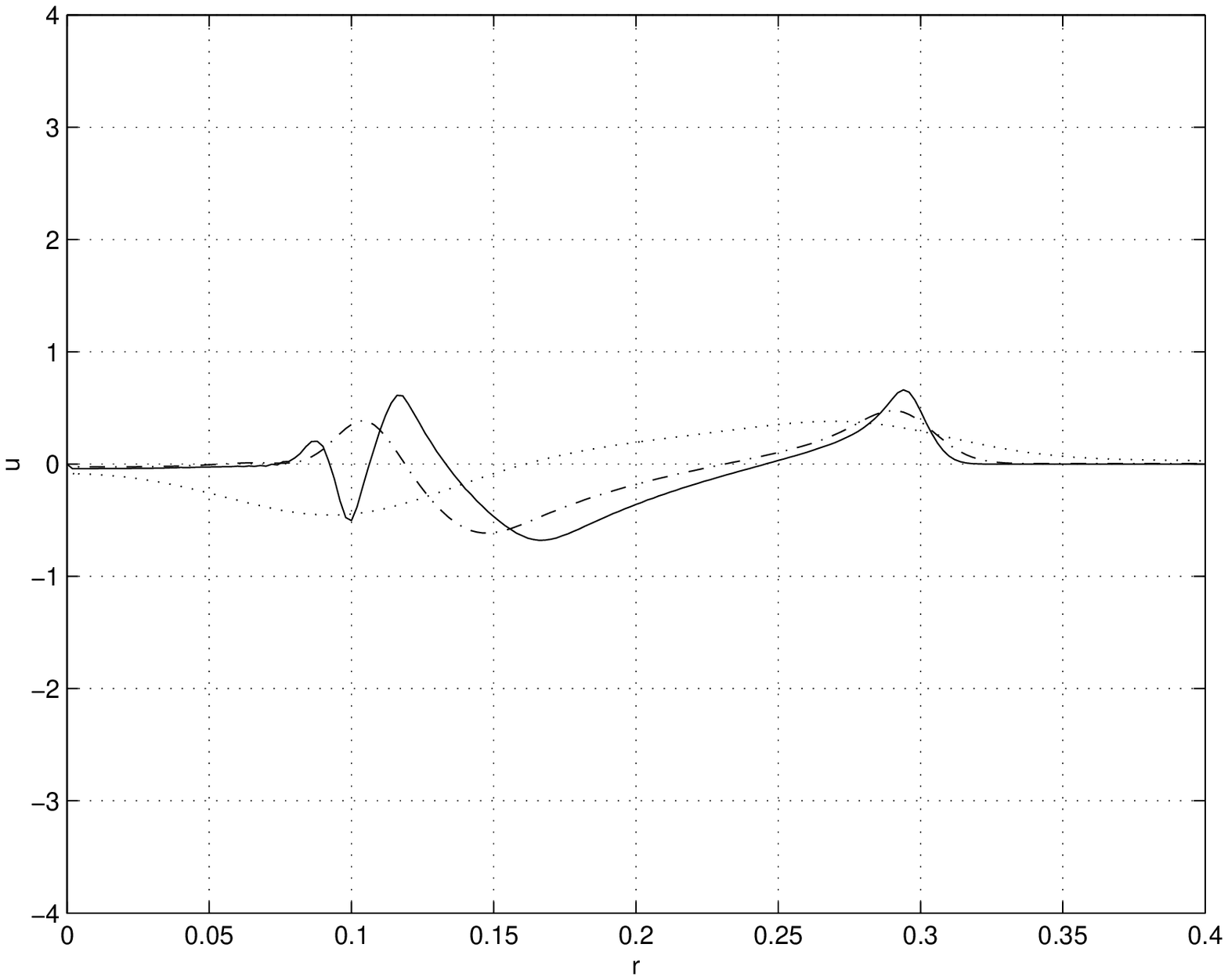} &
\includegraphics[width=0.5\textwidth]{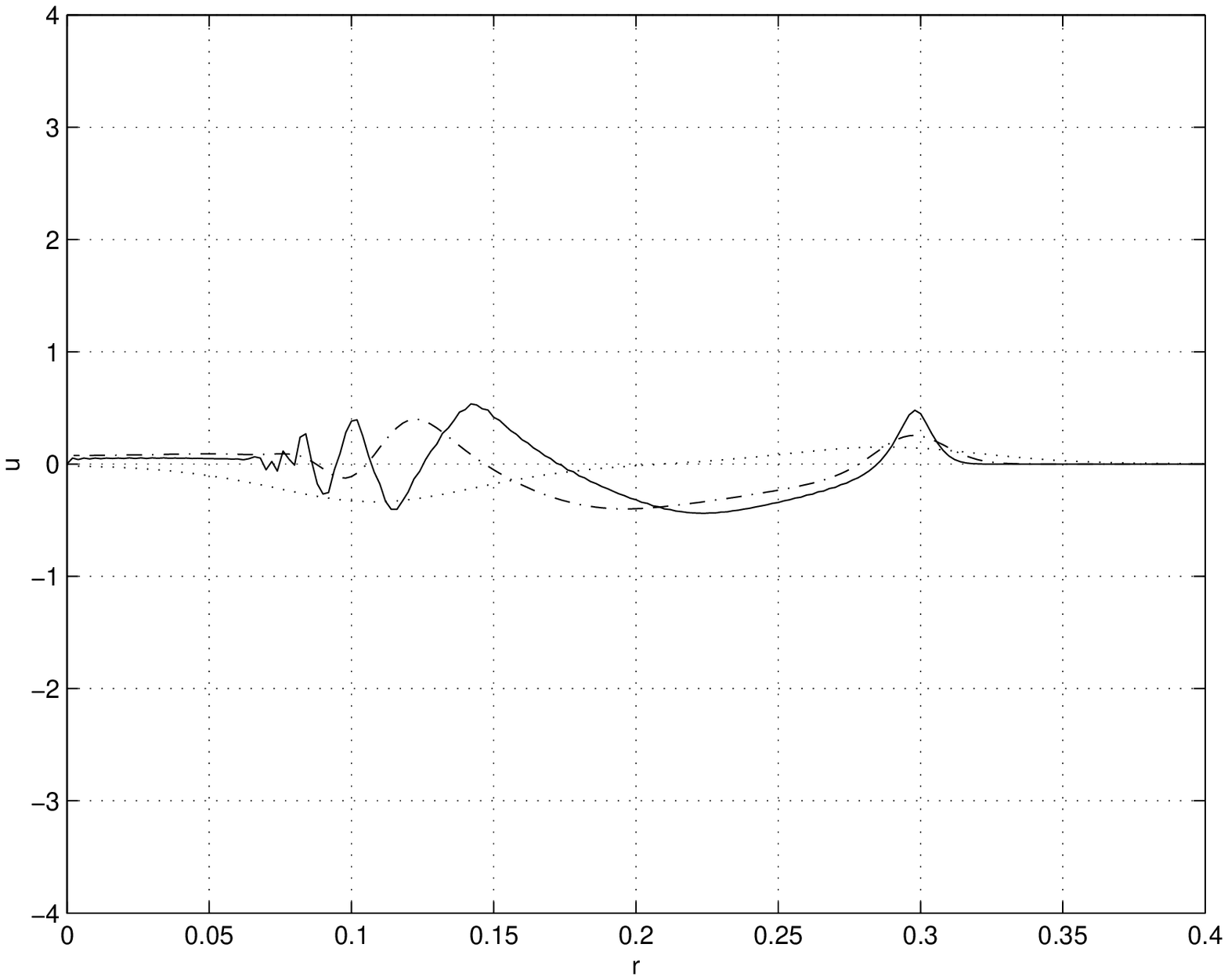} \\
\end{tabular}}
\caption{Approximate radial solutions of (\ref{paperproblem}) at
$t = 0.2$, for $\gamma = 5$, initial data $\phi (r) = 0$ and $\psi
(r) = 100 h (r)$, and $\beta = 0$ (solid), $\beta = 0.0005$
(dashed) and $\beta = 0.005$ (dotted). \label{Fig4-3-1}} %
\end{figure}

\begin{figure}[tcb]
\begin{tabular}{cc}
\scriptsize{$G ^\prime (u) = u ^3$} &
\scriptsize{$G ^\prime (u) = u ^3$} \\
\includegraphics[width=0.45\textwidth]{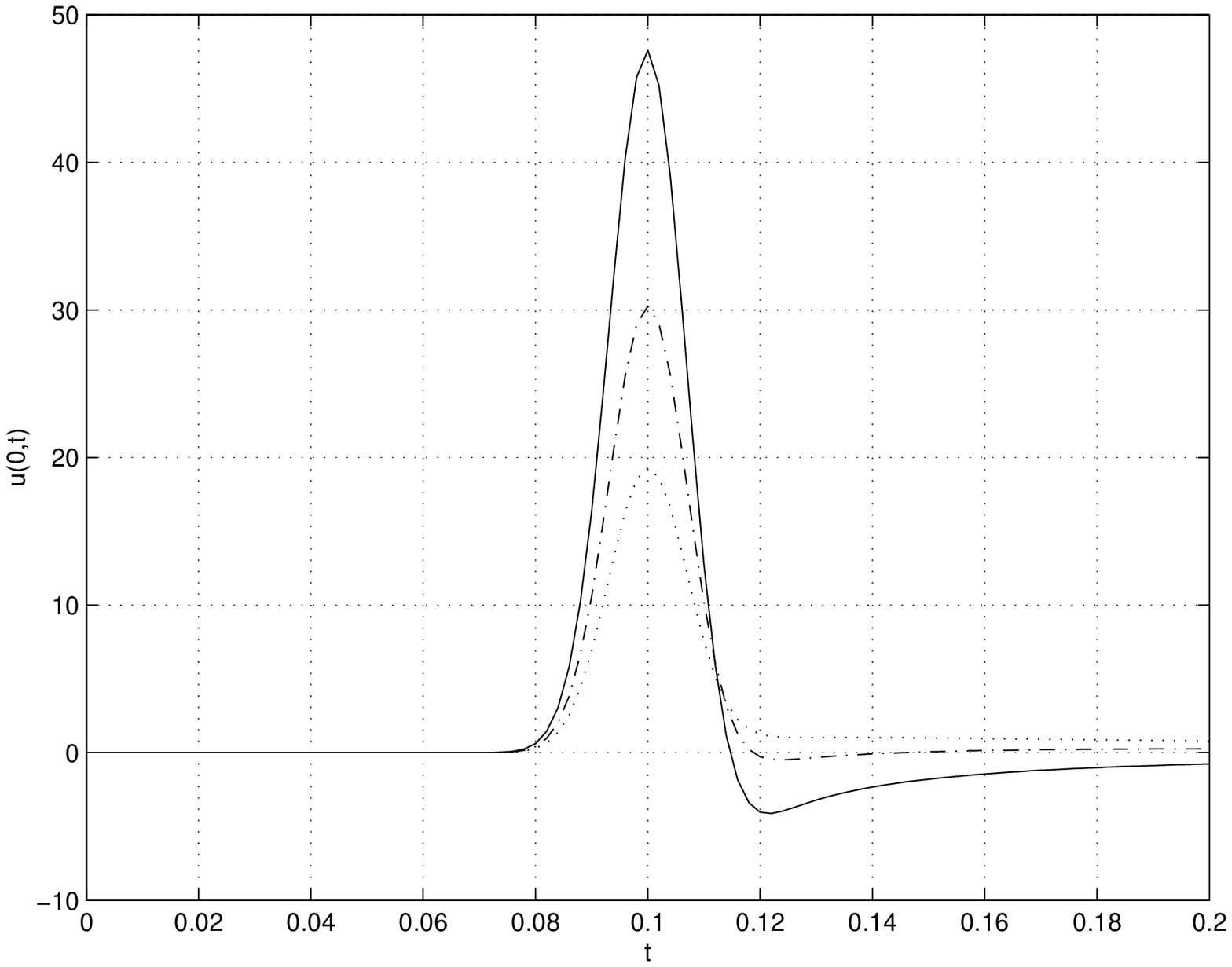} &
\includegraphics[width=0.45\textwidth]{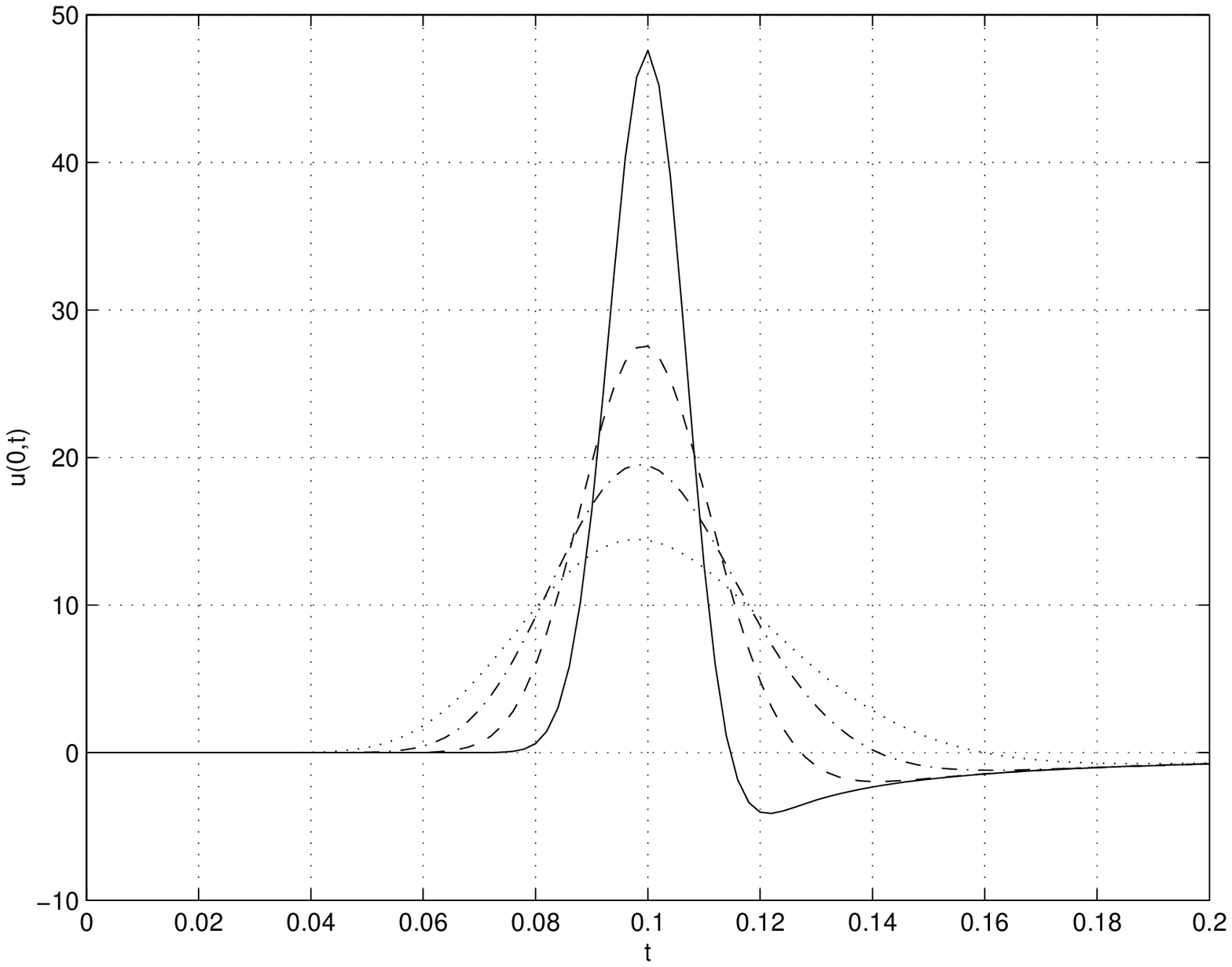} \\
\scriptsize{$G ^\prime (u) = u ^5$} &
\scriptsize{$G ^\prime (u) = u ^5$} \\
\includegraphics[width=0.45\textwidth]{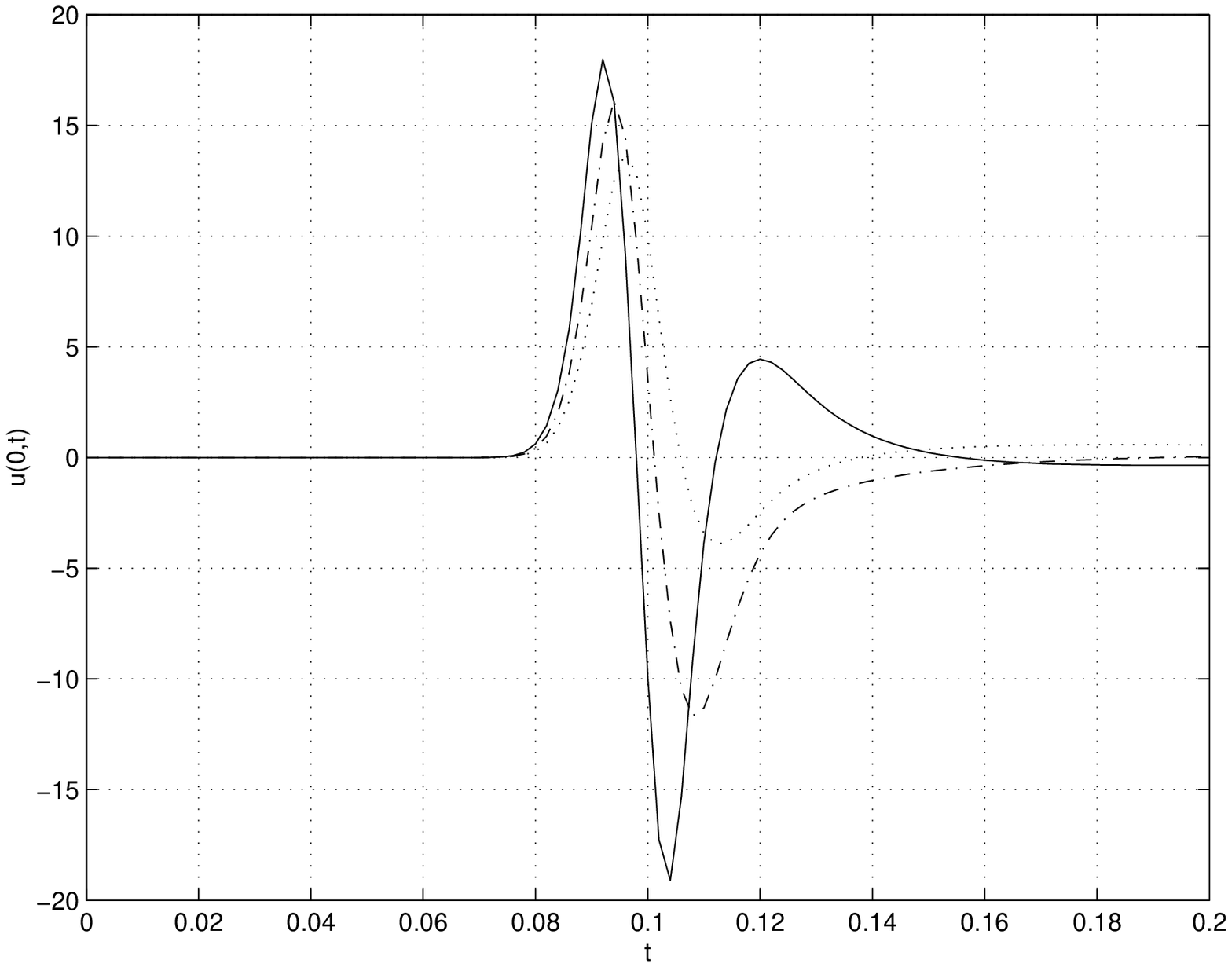} &
\includegraphics[width=0.45\textwidth]{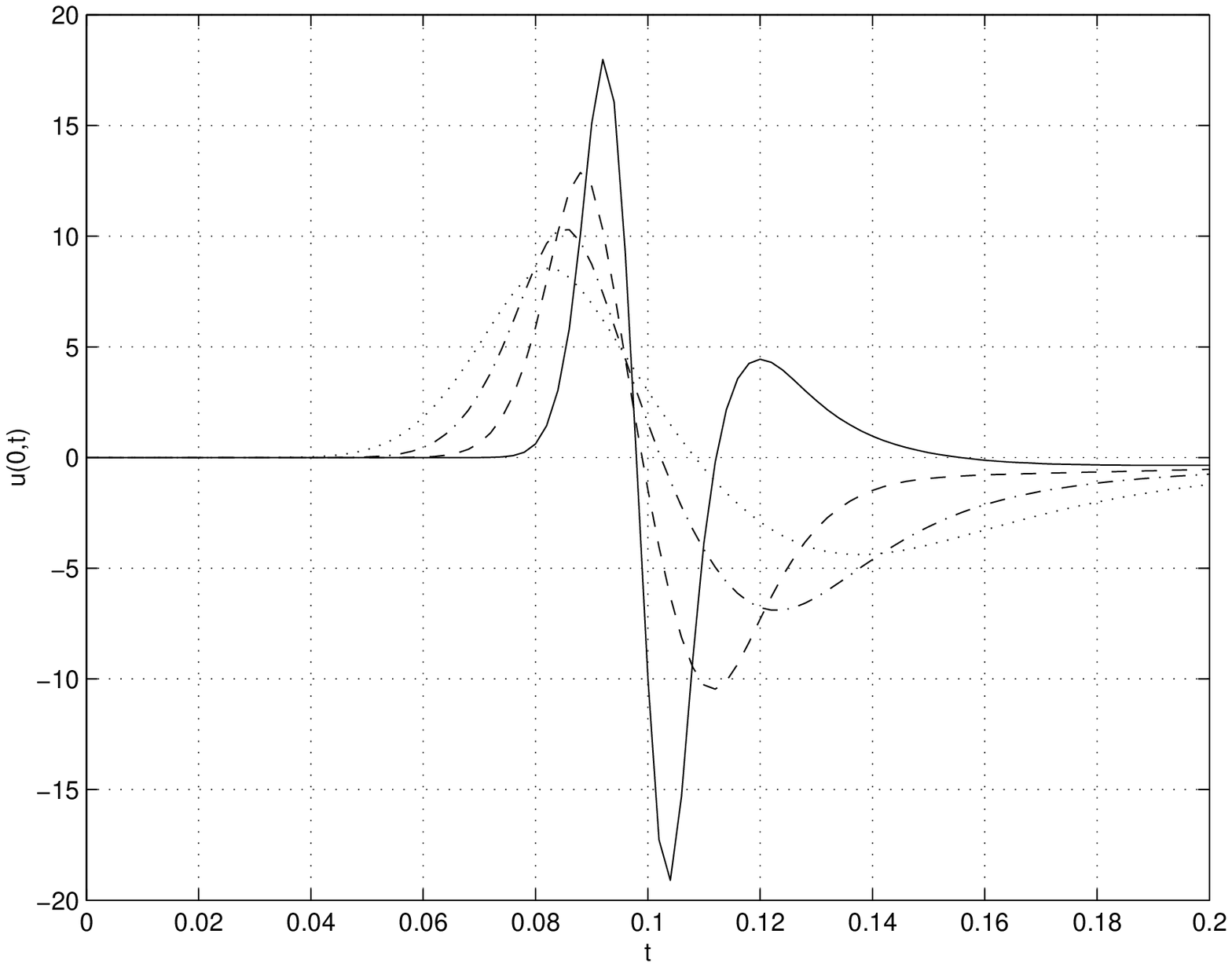} \\
\scriptsize{$G ^\prime (u) = u ^7$} &
\scriptsize{$G ^\prime (u) = u ^7$} \\
\includegraphics[width=0.45\textwidth]{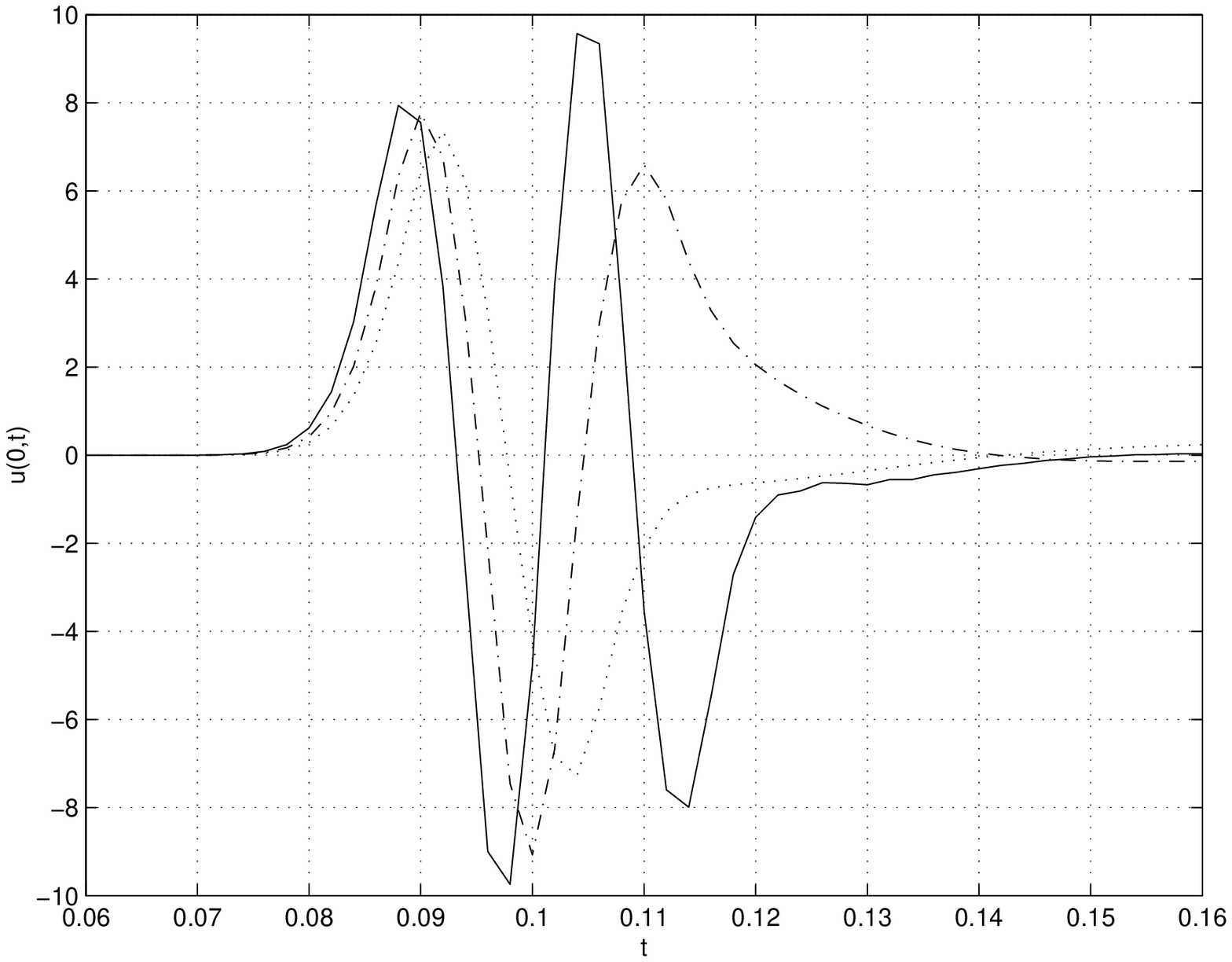} &
\includegraphics[width=0.45\textwidth]{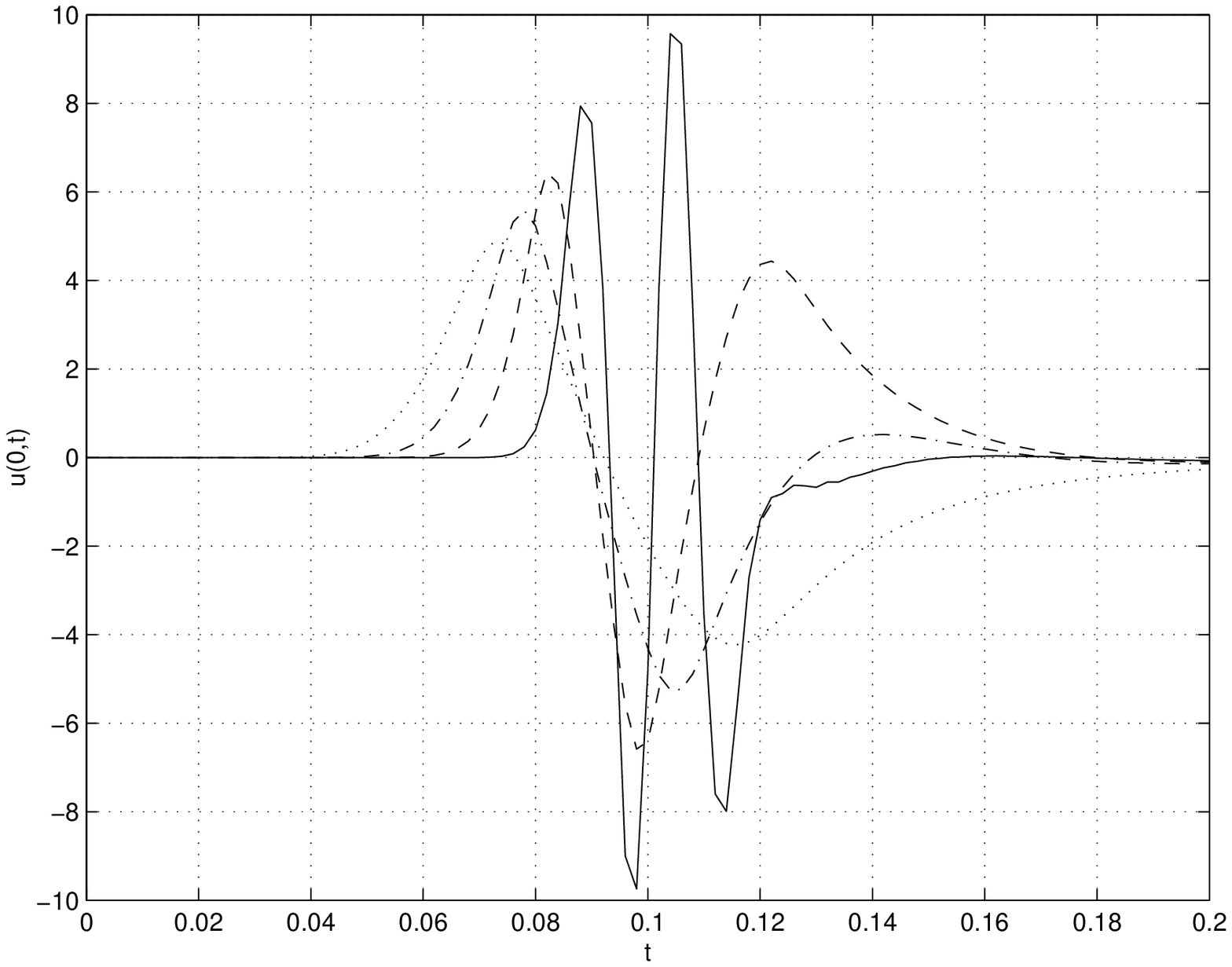} \\
\end{tabular}
\caption{Approximate value of solutions to (\ref{paperproblem})
near the origin vs. time for different nonlinear terms, and
initial conditions $\phi (r) = 0 $ and $\psi (r) = 100 h (r)$.
Left column: $\beta = 0$ and $\gamma = 0$ (solid), $\gamma = 10$ 
(dashed) and $\gamma = 20$ (dotted); right column: $\gamma =0$ and
$\beta = 0$ (solid), $\beta = 0.001$ (dashed), $\beta = 0.0025$
(dashed-dotted) and $\beta = 0.005$ (dotted). \label{Fig4-31}}
\end{figure}

\begin{figure}[tcb]
\centerline{
\begin{tabular}{cc}
\scriptsize{$G ^\prime (u) = u ^3$} &
\scriptsize{$G ^\prime (u) = u ^3$} \\
\includegraphics[width=0.45\textwidth]{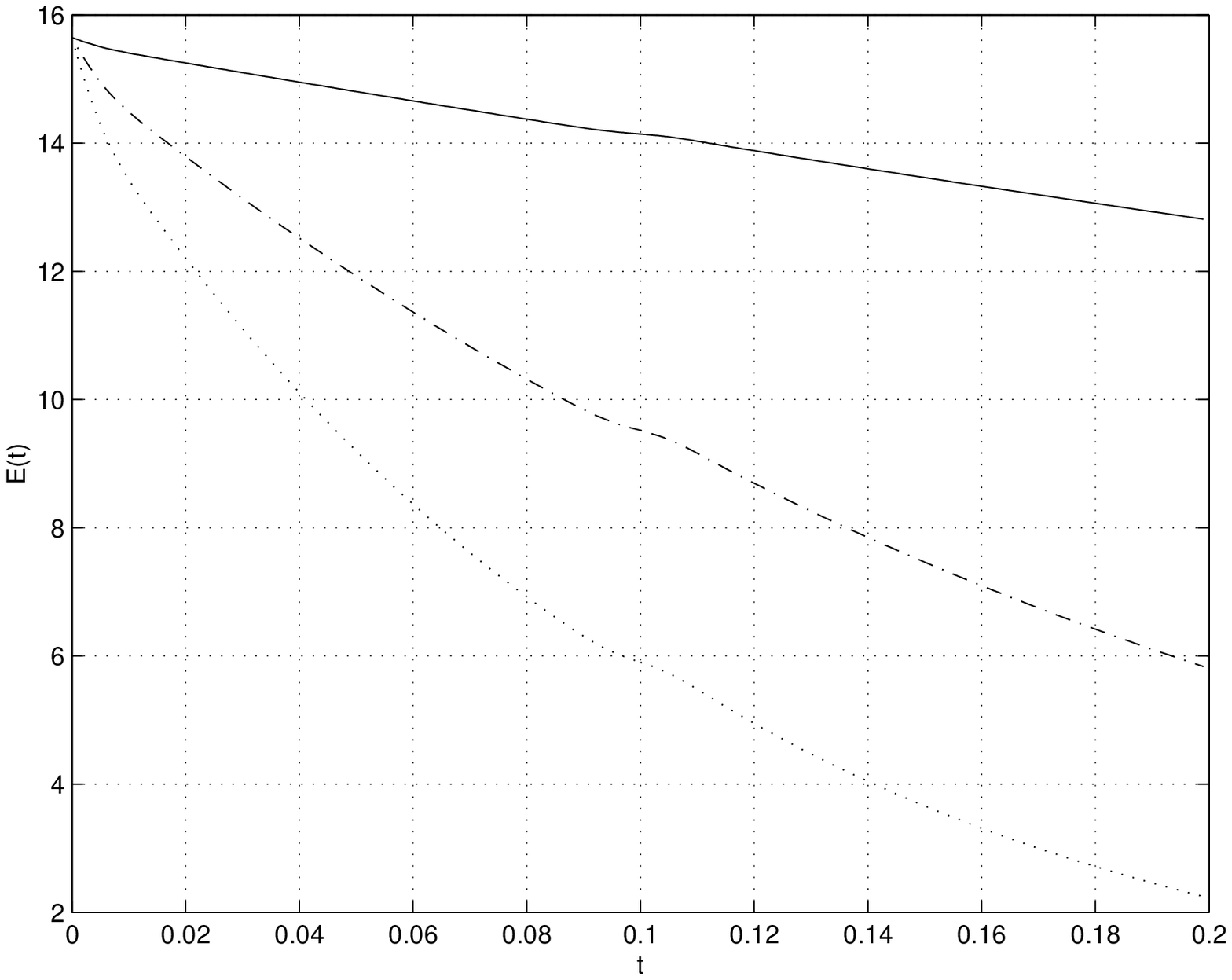} &
\includegraphics[width=0.45\textwidth]{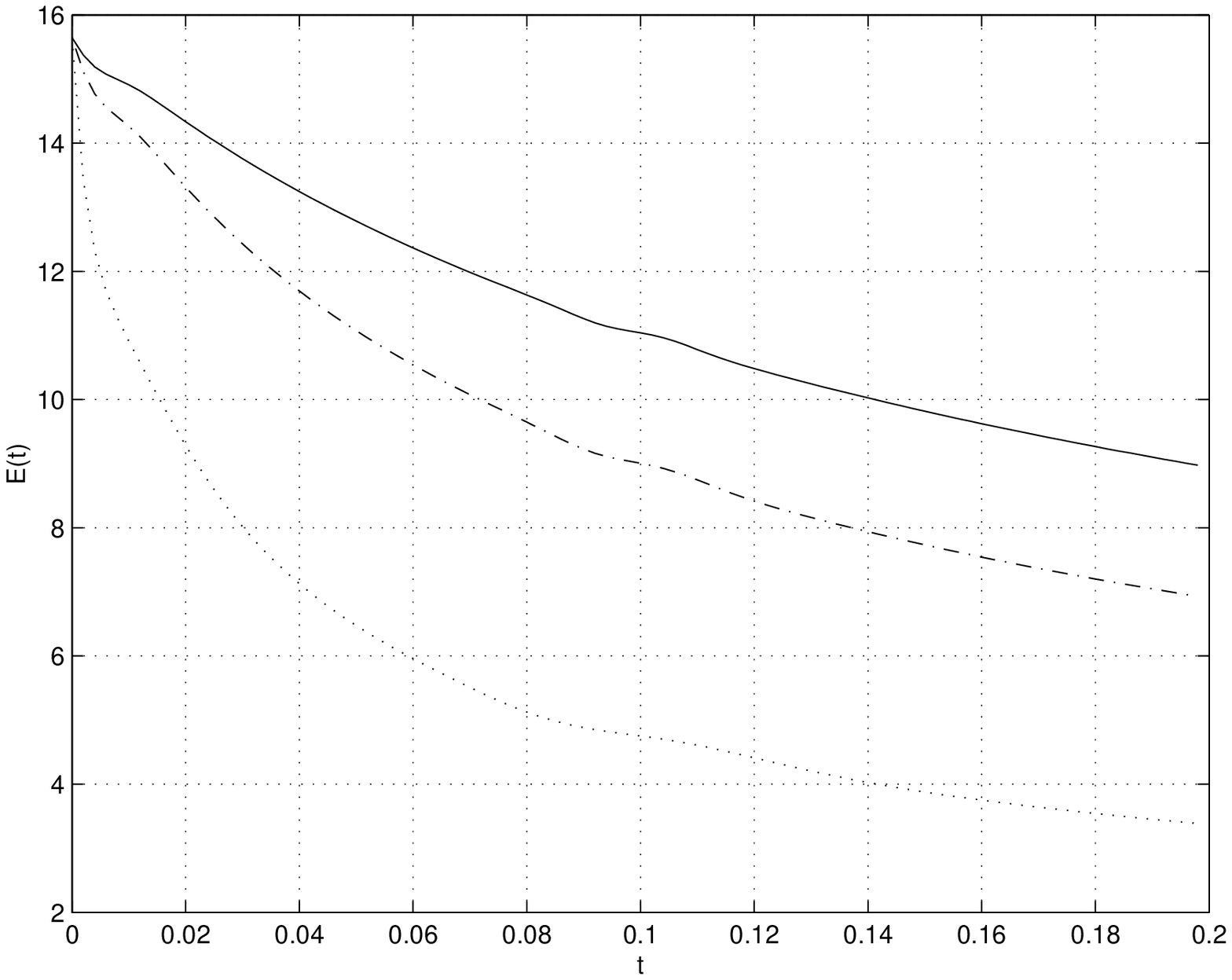} \\
\scriptsize{$G ^\prime (u) = u ^5$} &
\scriptsize{$G ^\prime (u) = u ^5$} \\
\includegraphics[width=0.45\textwidth]{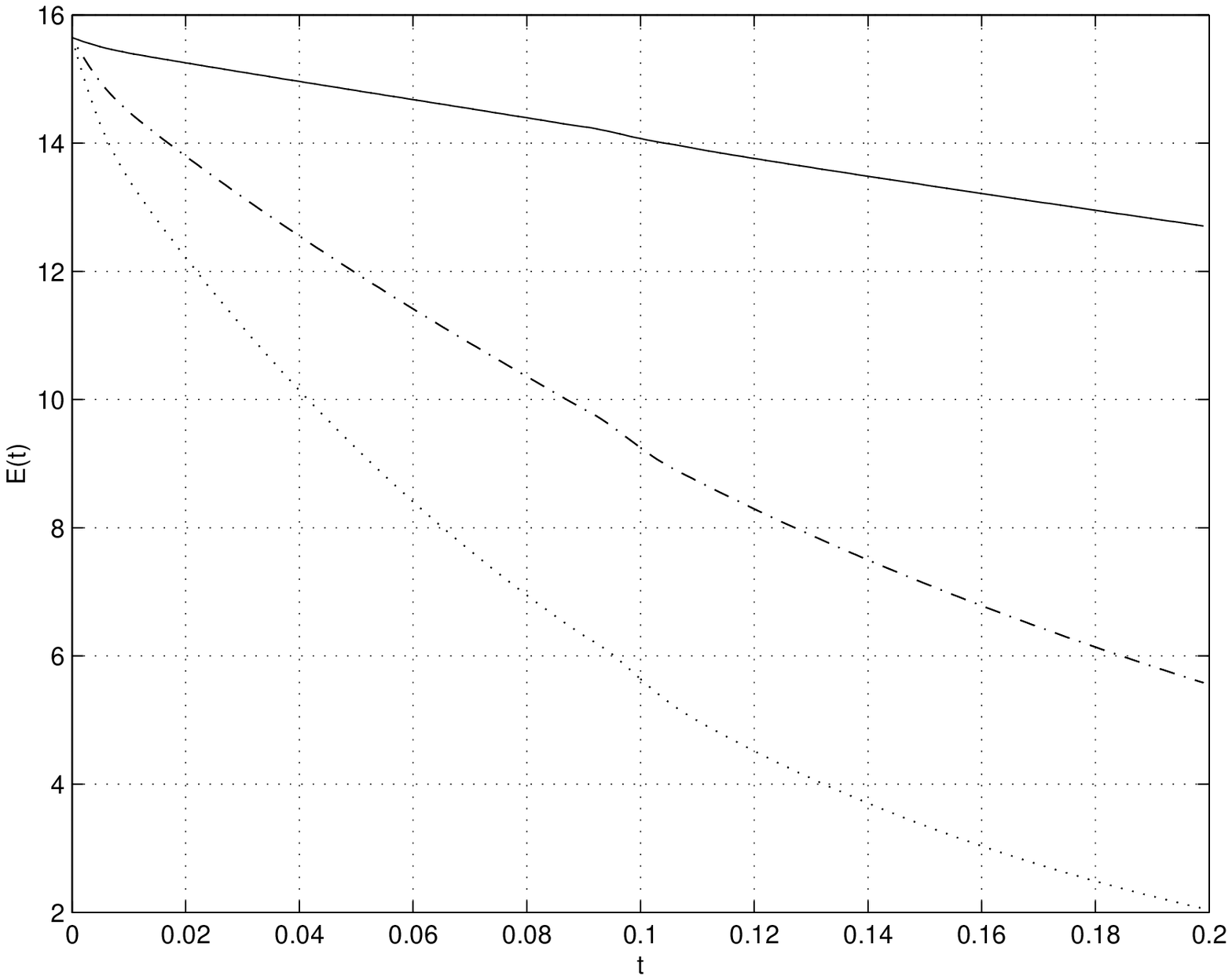} &
\includegraphics[width=0.45\textwidth]{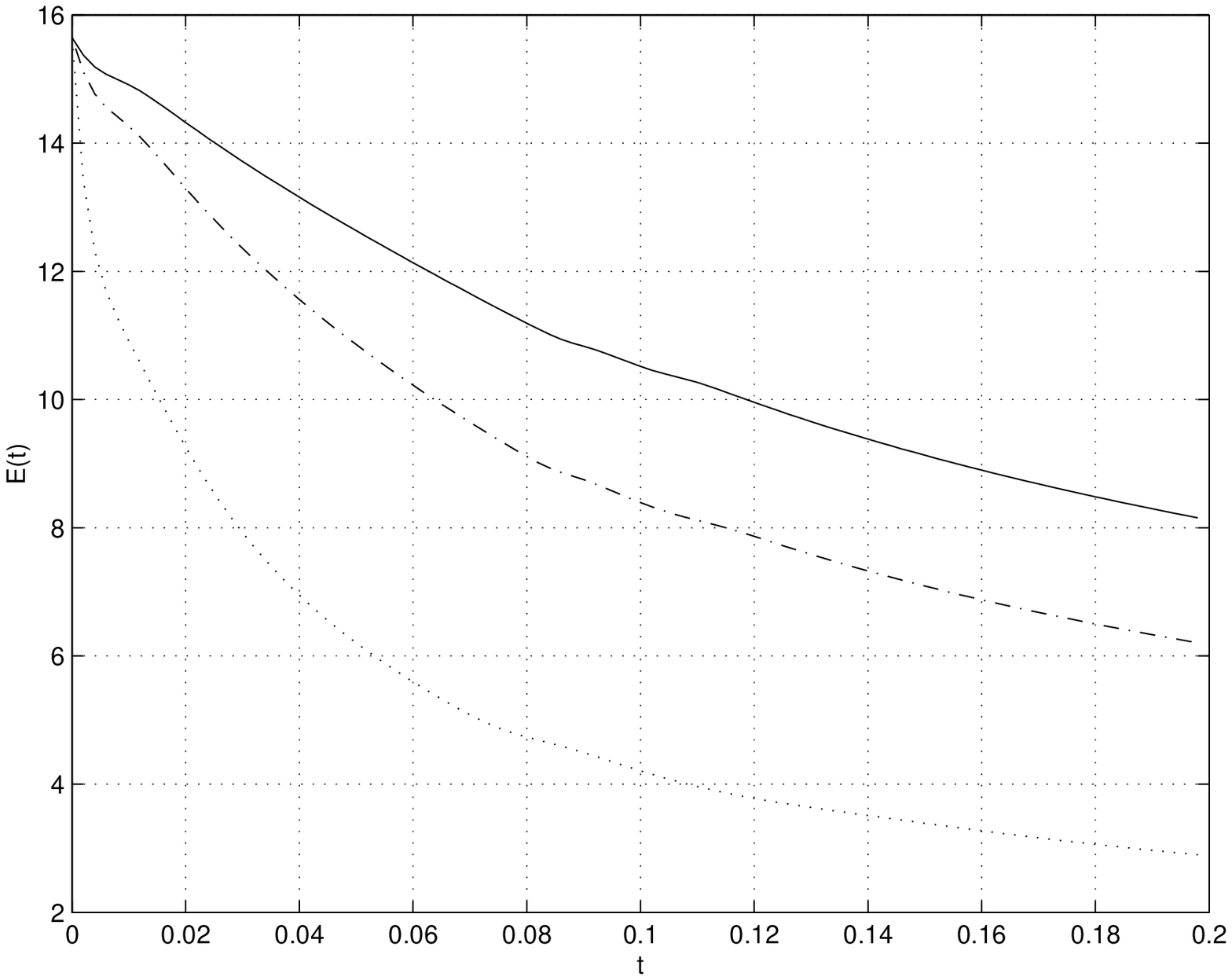} \\
\scriptsize{$G ^\prime (u) = u ^7$} &
\scriptsize{$G ^\prime (u) = u ^7$} \\
\includegraphics[width=0.45\textwidth]{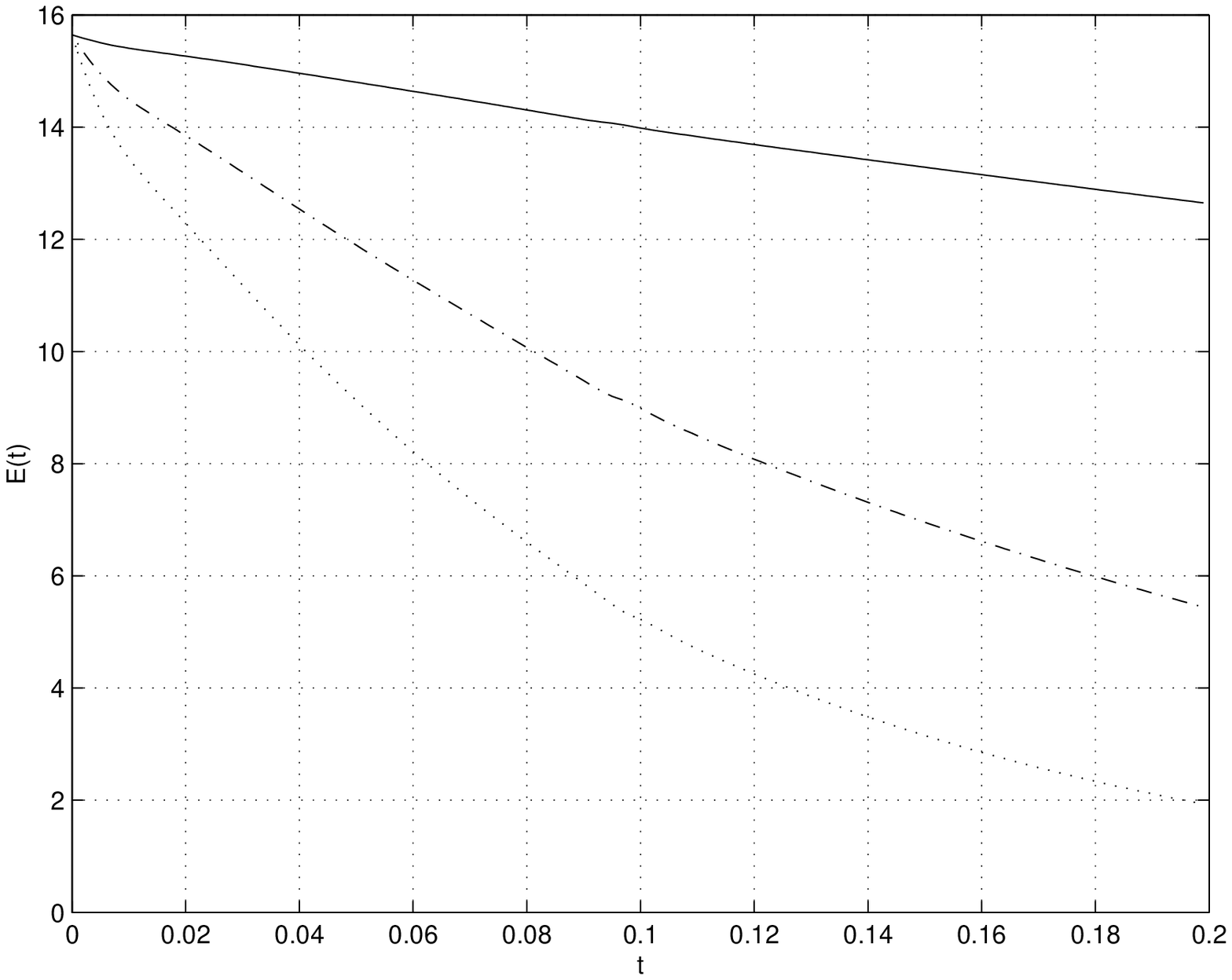} &
\includegraphics[width=0.45\textwidth]{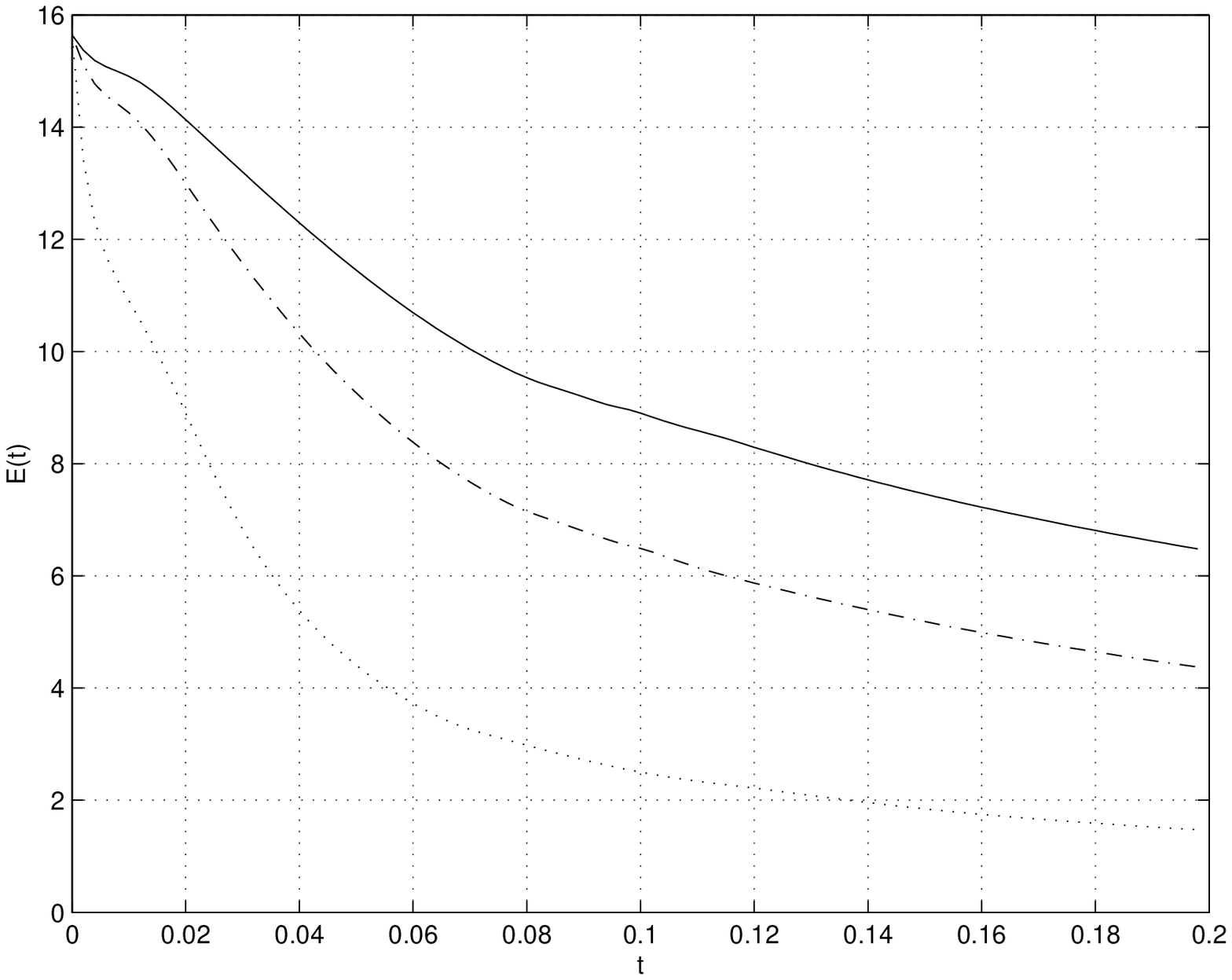} \\
\end{tabular}}
\caption{Total energy vs. time for initial data $\phi (r) = 0$ and
$\psi (r) = 100 h (r)$. Left column: $\beta = 0$ and $\gamma = 1$
(solid), $5$ (dashed), $10$ (dotted). Right column: $\gamma = 0$
and $\beta = 0.0005$ (solid), $0.001$ (dashed), $0.005$ (dotted).
\label{Fig4-3-2}} %
\end{figure}

Assume that $G \colon \mathbb {R} \rightarrow \mathbb {R}$ is
continuously differentiable, and that $w ( \bar {x} , t )$ is a
solution of (\ref{paperproblem}) in a domain $D$ of $\mathbb {R}
^3$. Moreover, we assume that $\nabla w \cdot \hat {\mathrm {n}}$
is zero near the boundary of $D$ at all time, where $\hat {\mathrm
{n}}$ denotes the unit vector normal to the boundary of $D$. The
Lagrangian associated with our nonlinear modified Klein-Gordon
equation is given by
\begin{equation}
\mathcal {L}  = \frac {1} {2} \left\{ \left( \frac {\partial w}
{\partial t} \right) ^2 - | \nabla w | ^2 - m ^2 w ^2 \right\} -
G(w). \nonumber
\end{equation}
It is easy to derive the following expression for the total energy
associated with our nonlinear dissipative Klein-Gordon-like
equation:
\begin{equation}
E (t) = \iiint _D \left\{ \frac {1} {2} \left( \frac {\partial w}
{\partial t} \right) ^2 + \frac {1} {2} | \nabla w | ^2 + \frac {m
^2} {2} w ^2 + G (w) \right\} d \bar {x}. \label{EnergyEq1}
\end{equation}

\begin{proposition}
The instantaneous rate of change with respect to time of the total
energy associated with the PDE in {\rm (\ref{paperproblem})} is
given by
\begin{equation}
E ^\prime (t) =  - \iiint _D \left\{ \beta \left\Vert \nabla
\left( \frac {\partial w} {\partial t} \right) \right\Vert ^2 +
\gamma \left( \frac {\partial w} {\partial t} \right) ^2 \right\}
\ d \bar {x}. \nonumber
\end{equation}
\end{proposition}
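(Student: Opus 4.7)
The plan is a direct calculation: differentiate $E(t)$ under the integral sign, substitute for $w_{tt}$ using the PDE in (\ref{paperproblem}), and then dispose of the spatial derivative terms by integration by parts, using the boundary hypothesis $\nabla w \cdot \hat{\mathrm{n}} = 0$ near $\partial D$ to kill the surface terms.

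First I would differentiate (\ref{EnergyEq1}) under the integral to obtain
\begin{equation*}
E'(t) = \iiint_D \left\{ w_t \, w_{tt} + \nabla w \cdot \nabla w_t + m^2 w \, w_t + G'(w) \, w_t \right\} d\bar{x}.
\end{equation*}
Applying Green's first identity to the middle term gives
\begin{equation*}
\iiint_D \nabla w \cdot \nabla w_t \, d\bar{x} = \iint_{\partial D} w_t \, (\nabla w \cdot \hat{\mathrm{n}}) \, dS - \iiint_D w_t \, \nabla^2 w \, d\bar{x},
\end{equation*}
and the surface integral vanishes by hypothesis. Collecting what remains, I have
\begin{equation*}
E'(t) = \iiint_D w_t \left\{ w_{tt} - \nabla^2 w + m^2 w + G'(w) \right\} d\bar{x}.
\end{equation*}

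Next I would use the PDE in (\ref{paperproblem}) to eliminate the bracketed quantity. Rearranging the equation yields
\begin{equation*}
w_{tt} - \nabla^2 w + m^2 w + G'(w) = \beta \, \partial_t(\nabla^2 w) - \gamma \, w_t,
\end{equation*}
so that
\begin{equation*}
E'(t) = \beta \iiint_D w_t \, \nabla^2 w_t \, d\bar{x} - \gamma \iiint_D w_t^2 \, d\bar{x},
\end{equation*}
after interchanging the spatial Laplacian with the time derivative.

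The final step is a second application of Green's identity, now to $w_t \, \nabla^2 w_t$. Differentiating the boundary condition $\nabla w \cdot \hat{\mathrm{n}} = 0$ in time gives $\nabla w_t \cdot \hat{\mathrm{n}} = 0$ near $\partial D$, so the resulting boundary integral again vanishes and
\begin{equation*}
\iiint_D w_t \, \nabla^2 w_t \, d\bar{x} = - \iiint_D \|\nabla w_t\|^2 \, d\bar{x}.
\end{equation*}
Substituting this back yields the claimed formula. No step is really an obstacle; the only point that requires care is justifying the vanishing of the $\nabla w_t \cdot \hat{\mathrm{n}}$ boundary term, which follows from differentiating the standing boundary hypothesis in $t$, and in any case is automatic if $\phi$ and $\psi$ (hence $w$) have compact support away from $\partial D$ throughout the time interval considered.
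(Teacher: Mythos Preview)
Your proof is correct and follows essentially the same route as the paper: differentiate under the integral, use Green's first identity to trade $\nabla w\cdot\nabla w_t$ for $-w_t\nabla^2 w$, substitute the PDE, and apply Green's identity once more to $w_t\nabla^2 w_t$, with both boundary terms vanishing by the hypothesis on $\nabla w\cdot\hat{\mathrm n}$. The only cosmetic difference is that the paper carries the first surface integral along and disposes of both at the end, whereas you kill each as it arises.
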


\begin{proof} Taking derivative on both sides of Equation
(\ref{EnergyEq1}), we obtain that
\begin{eqnarray}
\frac {d E} {d t} & = & \iiint _D \frac {\partial w} {\partial t}
\left\{ \frac {\partial ^2 w} {\partial t^2} + m ^2 w + G ^\prime
(w) \right\} d \bar {x} + \frac {1} {2} \iiint _D \frac {\partial}
{\partial t} | \nabla w | ^2 d \bar {x} \nonumber \\
 & = & \iiint _D \frac {\partial w} {\partial t} \left\{ \frac
{\partial ^2 w} {\partial t^2} - \nabla ^2 w + m ^2 w + G ^\prime
(w) \right\} d \bar {x} + \iint _{\partial D} \frac {\partial w}
{\partial t} \nabla w \cdot \hat {\mathrm {n}}
\ d \sigma \nonumber \\
 & = & \beta \iiint _D \frac {\partial w} {\partial t} \nabla ^2
\left( \frac {\partial w} {\partial t} \right) \ d \bar {x} -
\gamma \iiint _D \left( \frac {\partial w} {\partial t} \right) ^2
d \bar {x} + \iint _{\partial D} \frac {\partial w} {\partial t}
\nabla w \cdot \hat {\mathrm {n}} \ d \sigma. \nonumber
\end{eqnarray}

On the other hand, from Green's first identity we see that
\begin{equation}
\iiint _D \frac {\partial w} {\partial t} \nabla ^2 \left( \frac
{\partial w} {\partial t} \right) \ d \bar {x} = \iint _{\partial
D} \frac {\partial w} {\partial t} \frac {\partial} {\partial t}
\left( \nabla w \cdot \hat {\mathrm {n}} \right) \ d \sigma -
\iiint _D \left\Vert \nabla \left( \frac {\partial w} {\partial t}
\right) \right\Vert ^2 \ d \bar {x}. \nonumber
\end{equation}
The surface integrals in these last two equations are equal to
zero, whence the result follows.
\end{proof}

It is worthwhile noticing that if $\beta$ and $\gamma$ are
positive then the total energy is decreasing in time. Also, if
$\beta$ and $\gamma$ are both equal to zero then the energy is
conserved. Finally, if $\beta$ is zero then the expression of
$E ^\prime (t)$ coincides with the one derived in
\cite{StraussBook}.

\smallskip
Let us assume now that $G$ is nonnegative. The total energy in
this case is likewise nonnegative and the integral of every term
in (\ref{EnergyEq1}) is bounded by $\sqrt {2 E (t)} / m$. In particular, this
last statement implies that the integral of $w ^2$ at time $t$ is
bounded by $E (t)$. For those times $t$ for which $E (t)$ is
finite (and particularly for the case when $\beta$ and $\gamma$
are both equal to zero), this means that $w$ is a
square-integrable function in the first variable at time $t$.

\smallskip
Let $G ^\prime (w) = w ^p$ with $p > 1$. Assuming that $w$ is a
radially symmetric solution of the damped nonlinear Klein-Gordon
equation in a sphere $D$ with center in the origin and radius $L$,
and using the transformation $v (r , t) = r w (r , t)$ the energy
expression adopts the form $E(t) = 4 \pi E _0 (t)$, with
\begin{equation}
E _0 (t) = \int _0 ^L \left\{ \frac {1} {2} \left( \frac {\partial
v} {\partial t} \right) ^2 + \frac {1} {2} \left( \frac {\partial
v} {\partial r} \right) ^2 + \frac {m ^2} {2} v ^2 + r ^{1 - p} G
(v) \right\} dr. \label{EnergyChichona}
\end{equation}

The instantaneous rate of change of energy is given by $E ^\prime
(t) = 4 \pi E _0 ^\prime (t)$, where
\begin{equation}
E ^\prime _0 (t) = -  \int _0 ^L \left\{ \beta \left( \frac
{\partial ^2 v} {\partial r \partial t} - \frac {1} {r} \frac
{\partial v} {\partial t} \right) ^2 + \gamma \left( \frac
{\partial v} {\partial t} \right) ^2 \right\} dr. \nonumber
\end{equation}
It is possible to reproduce now the argument in
\cite{StraussVazquez} to show that for every $t$ and nonzero $r$,
$| w (r , t) | \leq \sqrt {2 E _0 (t) } / r$. This means in
particular that if a solution were unbounded, it would have to be
unbounded at the origin.

\smallskip
The discrete energy is given by
\begin{eqnarray}
\frac {E_0 ^n} {\Delta r} & = & \frac {1} {2} \sum _{j=0} ^{m-1}
\left(
\frac {v _j ^{n+1} - v _j ^n } {\Delta t} \right) ^2 + %
\frac {1} {2} \sum _{j=0} ^{m-1} \left( \frac {v _{j+1} ^{n+1} - v
_j ^{n+1}} {\Delta r}\right) \left( \frac {v _{j+1} ^n - v _j ^n}
{\Delta r} \right) \nonumber \\ %
 & & \qquad + \frac {1} {2} \sum _{j=0} ^{m-1} \frac {(v _j ^{n+1}) ^2 +
 (v _j ^n) ^2} {2} + \sum _{j=1} ^{m-1} \frac {G(v _j ^{n+1}) +
 G(v _j ^n)} {2 (j \Delta r) ^{p-1}}. \nonumber
\end{eqnarray}
This expression is obviously consistent with
(\ref{EnergyChichona}). Moreover, taking the difference between $E
_0 ^n / \Delta r$ and $E _0 ^{n - 1} / \Delta r$ and simplifying
after using (\ref{EasyScheme1}), it can be shown that
\begin{eqnarray}
\frac {E _0 ^n - E _0 ^{n - 1}} {\Delta t} & = & - \beta \sum _{j
= 1} ^{m - 1} \left( \frac { v _j ^{n + 1} - v _j ^{n - 1} } {2
\Delta t} \right) \left( \frac { ( v _j ^{n + 1} - v _j ^{n - 1} )
- ( v _{j - 1} ^{n + 1} - v _{j - 1} ^{n - 1} ) } { \Delta t (
\Delta r ) ^2} \right) \Delta r \nonumber \\
 & & \qquad - \gamma \sum _{j
= 1} ^{m - 1} \left( \frac {v _j ^{n + 1} - v _j ^{n - 1}} {2
\Delta t} \right) ^2 \Delta r. \nonumber
\end{eqnarray}

For $\beta = 0$ this expression provides us with a consistent
approximation to the instantaneous rate of change of energy.
Numerical results demonstrate that energy decreases as a function
of time for $\beta > 0$, which is in agreement with the
corresponding instantaneous change of energy as a function of
time.

\section{Numerical results}
\label{sec3}

\begin{table}[tcb]
\caption{Relative differences of externally damped solutions to
(\ref{paperproblem}) with respect to the corresponding undamped
solution at different time steps. \label{table4-1}}
\begin{tabular*}{\textwidth}{@{\extracolsep{\fill}}cccccc} \hline
{\bf Time step} & \multicolumn{5}{c}{{\bf Relative differences}}
\\ \cline{2-6} %
$n$ & $\gamma = 0.1$ & $\gamma = 0.5$ & $\gamma = 1$ & $\gamma =
5$ & $\gamma = 10$  \\ \hline %
0   & 0.0000 & 0.0000 & 0.0000 & 0.0000 & 0.0000 \\ %
20  & 0.0028 & 0.0142 & 0.0283 & 0.1395 & 0.2693 \\ %
40  & 0.0103 & 0.0509 & 0.1006 & 0.4491 & 0.7706 \\ %
60  & 0.0167 & 0.0821 & 0.1611 & 0.6579 & 0.9573 \\ %
80  & 0.0192 & 0.0942 & 0.1836 & 0.6954 & 0.9387 \\ %
100 & 0.0200 & 0.0977 & 0.1896 & 0.6994 & 0.9308 \\ \hline %
\end{tabular*}
\end{table}

\begin{table}[tcb]
\caption{Table of relative differences of externally damped solutions of
(\ref{paperproblem}) with respect to the corresponding undamped
solution at $t = 0.2$. \label{table4-2}}
\begin{tabular*}{\textwidth}{@{\extracolsep{\fill}}cccccc} \hline
{\bf Nonlinear Term} & \multicolumn{5}{c}{{\bf Relative
differences}} \\ \cline{2-6} %
$G ^\prime (u) $ & $\gamma = 0.1$ & $\gamma = 0.5$ & $\gamma = 1$
& $\gamma =5$ & $\gamma = 10$  \\ \hline %
$0$                 & 0.0098 & 0.0478 & 0.0923 & 0.3642 & 0.5631 \\ %
$u ^3$        & 0.0097 & 0.0477 & 0.0929 & 0.3528 & 0.5554 \\ %
$u ^5$        & 0.0137 & 0.0665 & 0.1287 & 0.4024 & 0.6418 \\ %
$u ^7$        & 0.0171 & 0.0833 & 0.1618 & 0.5068 & 0.7819 \\ %
$u ^9$        & 0.0204 & 0.0999 & 0.1728 & 0.5736 & 0.8488 \\ %
$\sinh (5 u) - 5 u$ & 0.0263 & 0.1377 & 0.2518 & 0.6284 & 0.8813 \\ \hline %
\end{tabular*}
\end{table}

\begin{table}[tcb]
\caption{Relative differences of internally damped solutions to
(\ref{paperproblem}) with respect to the corresponding undamped
solution at different time steps.  \label{table4}}
\begin{tabular*}{\textwidth}{@{\extracolsep{\fill}}cccccc} \hline
{\bf Time step} & \multicolumn{5}{c}{{\bf Relative differences}}
\\ \cline{2-6} %
$n$ & $\beta = 10 ^{-6}$ & $\beta = 10 ^{-5}$ & $\beta = 10 ^{-4}$
& $\beta = 0.0005$ & $\beta = 0.001$ \\ \hline %
0   & 0.0000 & 0.0000 & 0.0000 & 0.0000 & 0.0000 \\ %
20  & 0.0005 & 0.0054 & 0.0517 & 0.2188 & 0.3682 \\ %
40  & 0.0030 & 0.0300 & 0.2640 & 0.8281 & 1.1457 \\ %
60  & 0.0156 & 0.0996 & 0.1493 & 1.6536 & 1.1460 \\ %
80  & 0.0102 & 0.0970 & 0.7242 & 1.1530 & 1.2138 \\ %
100 & 0.0080 & 0.0772 & 0.5751 & 1.0406 & 1.1435 \\ \hline %
\end{tabular*}
\end{table}

\begin{table}[tcb]
\caption{Table of relative differences of internally damped solutions of
(\ref{paperproblem}) with respect to the corresponding undamped
solution at $t = 0.2$. \label{table5}}
\begin{tabular*}{\textwidth}{@{\extracolsep{\fill}}cccccc} \hline
{\bf Nonlinear Term} & \multicolumn{5}{c}{{\bf Relative
differences}} \\ \cline{2-6} %
$G ^\prime (u) $ & $\beta = 10 ^{-6}$ & $\beta = 10 ^{-5}$ & $\beta = 10 ^{-4}$
& $\beta = 0.0005$ & $\beta = 0.001$ \\ \hline %
$0$                 & 0.0003 & 0.0027 & 0.0242 & 0.0859 & 0.1326 \\ %
$u ^3$        & 0.0003 & 0.0032 & 0.0289 & 0.1040 & 0.1621 \\ %
$u ^5$        & 0.0011 & 0.0105 & 0.0948 & 0.3374 & 0.5043 \\ %
$u ^7$        & 0.0023 & 0.0224 & 0.1825 & 0.5663 & 0.7327 \\ %
$u ^9$        & 0.0041 & 0.0397 & 0.3133 & 0.7318 & 0.9256 \\ %
$\sinh (5 u) - 5 u$ & 0.0063 & 0.0577 & 0.4717 & 0.9403 & 1.1007 \\ \hline %
\end{tabular*}
\end{table}

The numerical results presented in this section correspond to approximate solutions
of the dissipative, nonlinear, modified Klein-Gordon equation
\begin{equation*}
\frac {\partial ^2 u} {\partial t ^2} - \nabla ^2 u
    - \beta \frac {\partial} {\partial t} \left( \nabla ^2 u \right)
    + \gamma \frac {\partial u} {\partial t} + u + G ^\prime (u)
    = 0,
\end{equation*}
obtained using a tolerance of $10 ^{-5}$  and a maximum number of $20$ 
iterations on every application of Newton's method. The space and time
steps are always fixed as $\Delta r = \Delta t = 0.002$.

\subsection*{External damping}

Throughout this section we fix $\beta = 0$. 

\smallskip
Let us start considering the problem of approximating radially
symmetric solutions of (\ref{paperproblem}) with $G ^\prime (u) =
u ^7$, and initial data
$\phi (r) = h (r)$ and $\psi (r) = h ^\prime (r) + h (r) / r$,
where
\begin{equation}
h (r) = \left\{ %
\begin{array}{ll}
\displaystyle {5 \exp \left\{ 100 \left[ 1 - \frac {1} {1 - (10 r
- 1) ^2} \right] \right\} ,} & {\rm if} \ 0 \leq r < 0.2, \\
0, & {\rm if} \ 0.2 \leq r \leq 0.4.
\end{array} \right. \nonumber
\end{equation}

\smallskip
We have plotted numerical solutions of this problem for several
values of $\gamma$. The graphical results are presented in Figure
\ref{Fig4-2} for $\gamma = 0, 5, 10$. We observe first of all that
the solutions of the damped nonlinear Klein-Gordon-like equation
corresponding to small values of $\gamma$ are consistently similar
to those of the undamped case. To verify this claim
quantitatively, we consider the approximations $\bar {v} _0 ^n$
and $\bar {v} _\gamma ^n$ to the undamped and damped cases,
respectively, and compute the relative difference in the $\ell _{2
, \Delta x}$-norm via
\begin{equation}
\delta ( \bar {v} _\gamma ^n , \bar {v} _0 ^n ) = \frac {|| \bar
{v} _\gamma ^n - \bar {v} _0 ^n || _{2 , \Delta x}} { || \bar {v}
_0 ^n || _{2 , \Delta x} } \nonumber
\end{equation}
(here we follow \cite{Thomas}). The relative differences for
several values of $\gamma$ at consecutive time steps are shown in
Table \ref{table4-1}. We observe that the difference between the
solutions of the nonlinear Klein-Gordon-like equation with damping
coefficient $\gamma$ and the corresponding undamped equation can
be made arbitrarily small by taking $\gamma$ sufficiently close to
$0$.

\smallskip
We wish to corroborate this pattern for different
nonlinear terms and a different set of initial conditions.
With this objective in mind, Figure \ref{Fig4-3} depicts numerical solutions of
(\ref{paperproblem}) with $\gamma = 0, 5, 10$, nonlinear terms $G
^\prime (u) = 0, u^3, u ^5, u ^7, u ^9,$ and $\sinh (5u) - 5 u$, 
initial conditions $\phi (r) = 0$ and $\psi (r) = 100 h (r)$, and
values of $\gamma = 0, 5, 10$. More accurately, Table
\ref{table4-2} provides relative differences of these solutions at
$t = 0.2$ (for the nonlinear functions listed above and varying
values of $\gamma$) with respect to the corresponding undamped
solution, for a wider selection of values of the parameter $\gamma$.

\smallskip
It must be mentioned that, as it was expected, the total energy
was invariably decreasing for positive values of $\gamma$, and
increasing for negative values. For the value $\gamma = 0$, the
rate of change of energy is equal to zero and our numeric results
agree with \cite{StraussVazquez}. Experimental results show that
small values of $\gamma$ correspond with small values of the
discrete rate of change of the energy. This last observation
corroborates stability of our method.

\smallskip
We also observe that the amplitude of solutions corresponding to
positive values of $\gamma$ tend to decrease as time or $\gamma$
increases. Figure \ref{Fig4-3} partially corroborates that
behavior. We have computed solutions corresponding to negative
values of $\gamma$ (graphs not included in this paper) and have
verified that the amplitude of solutions increases with time and
with $| \gamma |$.

\smallskip
Finally, we have obtained graphs of the energy $E _0$ vs. time for
$G ^\prime (u) = u ^3, u ^5 , u ^7$, initial data $\phi (r) = 0$
and $\psi (r) = 100 h (r)$, and values of $\gamma = 1 , 5 , 10$.
The results (depicted in the left column of Figure \ref{Fig4-3-2})
show a loss in the total energy as a function of time.

\subsection*{Internal damping}

Consider first the case when $\gamma$ equals zero.
Figure \ref{Fig4-2-2} shows numerical solutions of
(\ref{paperproblem}) at consecutive times, for initial data $\phi (r) = h (r)$ and 
$\psi (r) = h ^\prime (r) + h (r) / r$, with nonlinear term
$G ^\prime (u) = u ^7$, and values
of $\beta = 0, 0.001, 0.003$. We
observe that small values of $\beta$ produce results similar to
those of the corresponding undamped case. To corroborate this claim,
we appeal once more to the relative differences in the $\ell _{2 ,
\Delta x}$-norm of dissipative solutions with respect to the non-dissipative
one. The results are shown in Table \ref{table4}. The results evidence
the continuity of solutions with respect to the parameter $\beta$
for this particular choice of nonlinearity, providing thus numerical
support in favor of the stability of our method.

\smallskip
We want to establish now the continuity of our method for several 
nonlinear terms at a fixed large time. In order to do it, Figure
\ref{Fig4-2-3-1} shows the numerical solutions of (\ref{paperproblem}) 
at time $t = 0.2$, for the nonlinear terms $G ^\prime (u) = u ^3,
u ^5, u ^7$, for two different sets of initial conditions:
$\phi (r) = 0$ and $\psi (r) = 100 (h)$, and $\phi (r) = h (r)$ and
$\psi (r) = 0$, and values of $\beta = 0 , 0.0001, 0.0002$. The graphs in this figure, together with the analysis
of relative differences in the $\ell _{2 , \Delta x}$-norm supplied
in Table \ref{table5} for the first set of initial conditions, evidence the continuity of the numerical
solution given by our method with respect to the parameter $\beta$
for different nonlinearities.

\smallskip
We now consider the case when $\gamma$ is nonzero. We use $G ^\prime (u) =
0, u ^3, u ^5, u ^7, u ^9$, and $\sinh (5 u) - 5 u$, initial data $\phi (r) =
0$ and $\psi (r) = 100 h (r)$, and time $t = 0.2$. Figure \ref{Fig4-3-1} shows numerical solutions of
(\ref{paperproblem}) for values of $\beta = 0, 0.0005, 0.005$. 
The solutions for smaller nonzero values of $\beta$ are indeed
closer to the corresponding internally undamped solution, while the larger values
of $\beta$ spread out the internally undamped solution at the
same time that the maximum amplitude is decreased.

\smallskip
In order to study the time behavior of the solutions near the origin
we have included Figure \ref{Fig4-31}, using initial data $\phi (r) =
0$ and $\psi (r) = 100 h (r)$, the nonlinear terms $G ^\prime (u) = u
^3$, $u ^5$ and $u ^7$, different choices of values for $\beta$ 
and $\gamma$, and $\Delta r = \Delta t = 0.002$. The left column shows the time-dependence of solutions
at the origin for $\beta = 0$ and three positive values of $\gamma$,
whereas the right column shows similar results for $\gamma = 0$ and
three positive values of $\beta$. We observe that the value of 
solutions at the origin for large times is always
approximately equal to zero for $\beta = 0$, which is in agreement with
our experience of the $(1+1)$-dimensional case.

\smallskip
Finally, Figure \ref{Fig4-3-2} shows the graphs of the energy $E _0$ vs.
time for $G ^\prime (u) = u ^3, u ^5 , u ^7$, initial data $\phi
(r) = 0$ and $\psi (r) = 100 h (r)$, and values of $\beta = 0.0005
, 0.001 , 0.005$. The results (depicted in the right column) show
a loss in the total energy as a function of time. It is clear that the
rate at which the total energy is lost due to internal damping is
greater than the corresponding rate due to external damping.

\section{Discussion}

A numerical method has been designed to approximate radially symmetric
solutions of some dissipative, nonlinear, modified Klein-Gordon
equations with constant internal and external damping coefficients
$\beta$ and $\gamma$, respectively. Our finite-difference scheme is in 
general agreement with the non-dissipative results presented in 
\cite{StraussVazquez}. The method is consistent $\mathcal {O} (
\Delta t ^2) + \mathcal {O} (\Delta r ^2)$, conditionally stable,
and continuous with respect to the 
parameters $\beta$ and $\gamma$; as expected, the total energy decays in time 
for positive choices of the parameters.
The corresponding scheme to approximate the total energy of the system is
consistent and has the property that the discrete rate of change of the
discrete energy with respect to time approximates the corresponding
continuous rate of change for $\beta = 0$. 

\smallskip
Several  conclusions can be drawn from our numerical computations. First of
all, we have seen that both internal and external damping tend to decrease
the magnitude of solutions, as it was expected. Our results clearly exhibit 
the dispersive effects of the parameter $\beta$ and the dissipative effects of $\gamma$. 
Our energy computations evidence the fact that the rate at which the energy is 
dissipated by the internal damping is faster than the corresponding rate 
of external damping. Finally, we observe that the effect of the nonlinear
term in the temporal behavior near the origin is to increase the number of
oscillations as the degree of the nonlinearity is increased. Invariably,
the solutions of the dissipative modified Klein-Gordon equation converge
in time to the trivial solution.

\end{document}